\tikzstyle directed=[postaction={decorate,decoration={markings,
    mark=at position #1 with {\arrow{>}}}}]
\tikzstyle rdirected=[postaction={decorate,decoration={markings,
    mark=at position #1 with {\arrow{<}}}}]
\tikzset{anchorbase/.style={baseline={([yshift=-0.5ex]current bounding box.center)}},
anchorzero/.style={baseline={([yshift=-0.5ex]0,0)}},
arrowinthemiddle/.style={postaction=decorate,decoration={markings,mark=at position 0.5 with {\arrow{>}}}},
arrowinthemiddlerev/.style={postaction=decorate,decoration={markings,mark=at position 0.5 with {\arrow{<}}}},
cross line/.style={preaction={draw=white,line width=4pt,-}},
int/.style={thick},
zero/.style={thin,dotted},
uno/.style={thin}}
\numberwithin{equation}{section}
\newtheoremstyle{myplain} {6pt plus 6pt minus 2pt}
{6pt plus 6pt minus 2pt}
{\itshape}
{}
{\bfseries}
{.}
{.5em}
{}
\theoremstyle{myplain}
\newtheorem{theorem}{Theorem}[section]
\newtheorem*{theorem*}{Theorem}
\newtheorem{lemma}[theorem]{Lemma}
\newtheorem{prop}[theorem]{Proposition}
\newtheorem{corollary}[theorem]{Corollary}
\newtheoremstyle{mydefinition} {6pt plus 6pt minus 2pt}
{6pt plus 6pt minus 2pt}
{\itshape}
{}
{\bfseries}
{.}
{.5em}
{}
\theoremstyle{mydefinition}
\newtheorem{definition}[theorem]{Definition}
\newtheoremstyle{myexample} {6pt plus 6pt minus 2pt}
{6pt plus 6pt minus 2pt}
{}
{}
{\scshape}
{.}
{.5em}
{}
\theoremstyle{myexample}
\newtheorem{example}[theorem]{Example}
\newtheoremstyle{myremark} {6pt plus 6pt minus 2pt}
{6pt plus 6pt minus 2pt}
{}
{}
{\scshape}
{.}
{.5em}
{}
\theoremstyle{myremark}
\newtheorem{remark}[theorem]{Remark}
\newtheoremstyle{citing}
{3pt}{3pt}
{\itshape}
{0pt}{\bfseries}
{.}
{ }
{\thmnote{#3}}
\theoremstyle{citing}
\newtheorem*{varthm}{}
\DeclareSymbolFontAlphabet{\mathbb}{AMSb}
\DeclareSymbolFontAlphabet{\mathbbol}{bbold}
\DeclareMathAlphabet{\mathpzc}{OT1}{pzc}{m}{it}
\DeclareSymbolFont{usualmathcal}{OMS}{cmsy}{m}{n}
\DeclareSymbolFontAlphabet{\mathucal}{usualmathcal}
\newcommand{\N}{\mathbb{N}}
\newcommand{\Z}{\mathbb{Z}}
\newcommand{\C}{\mathbb{C}}
\newcommand{\gl}{\mathfrak{gl}}
\newcommand{\mapto}{\rightarrow}
\newcommand\mapsfrom{\mathrel{\reflectbox{\ensuremath{\mapsto}}}}
\DeclareMathOperator{\Hom}{Hom}
\DeclareMathOperator{\End}{End}
\renewcommand{\epsilon}{\varepsilon}
\renewcommand{\phi}{\varphi}
\newcommand{\calS}{{\mathcal{S}}}
\newcommand{\sfP}{{\mathsf{P}}}
\newcommand{\lattice}{\sfP}
\newcommand{\zeroweight}{\boldsymbol{0}}
\newcommand{\onel}{\boldsymbol{1}_{\lambda}}
\newcommand{\onell}[1]{\boldsymbol{1}_{#1}}
\newcommand{\qbin}[2]{\genfrac{[}{]}{0pt}{}{#1}{#2}}
\newcommand{\U}{\dot{{\bf U}}}
\newcommand{\sln}{\mathfrak{sl}_n}
\newcommand{\slm}{\mathfrak{sl}_m}
\newcommand{\slnn}[1]{\mathfrak{sl}_{#1}}
\newcommand{\glk}{\mathfrak{gl}_k}
\newcommand{\gll}{\mathfrak{gl}_l}
\newcommand{\glnn}[1]{\mathfrak{gl}_{#1}}
\newcommand{\Uzmm}[1]{\U_q^{\geq 0}(\glnn{#1})}
\newcommand{\Uzmk}{\U_q^{\geq 0, \leq m}(\glk)}
\newcommand{\Uznnmm}[2]{\U_q^{\geq 0, \leq #1}(\glnn{#2})}
\newcommand{\UzmmB}[1]{\U_q(\glnn{#1})_{\beta}}
\newcommand{\Uzk}{\U_q^{\geq 0}(\glk)}
\newcommand{\Uzl}{\U_q^{\geq 0}(\gll)}
\newcommand{\UzklB}{\U_q(\glnn{k+l})_{\beta}}
\newcommand{\Uzerokl}{{\U_q(\glnn{k+l})_{0}}}
\newcommand{\Uzerokk}[1]{{\U_q(\glnn{#1})_{0}}}
\title[Doubled Schur algebras]{Homfly-Pt and Alexander polynomials from a doubled Schur algebra}
\author{Hoel Queffelec}
\address{Mathematical Sciences Institute, Australian National University, John Deedman Building, 27 Union Lane, Canberra ACT 0200}
\email{hoel.queffelec@anu.edu.au}
\author{Antonio Sartori}
\address{Mathematisches Institut, Albert-Ludwigs-Universität Freiburg,
Eckerstraße 1, 79104 Freiburg im Breisgau, Germany }
\email{antonio.sartori@math.uni-freiburg.de}
\begin{document}

\begin{abstract}
We define a generalization of the idempotented Schur algebra which gives a unified setting for a  quantum group presentation of the Homfly-Pt polynomial, together with its specializations to the Alexander polynomial and the \(\slm\) Reshetikhin-Turaev invariants.
\end{abstract}
\maketitle

\setcounter{tocdepth}{1}
\tableofcontents

%
\section{Introduction}
\label{sec:Intro}
%

If two knot invariants were to be cited, they would surely be the Alexander and Jones polynomials. The latter gave rise to a whole family of quantum invariants \cite{MR1036112}, built from maps of representations of the quantum group \(U_q(\slm)\), and has been the ground stone of quantum topology. Although originating from a much more topological setting, the Alexander polynomial, whose developments are as well still investigated, is known to admit quantum group based descriptions (see \cite{1149.57024,SarAlexander} and references therein).

A unifying generalization of these two polynomials is provided by the two-variable Homfly-Pt polynomial \cite{Homfly,PT}. Unfortunately, the quantum group descriptions don't seem to extend very well to this new context, mostly because the expected appearance of infinite-dimensional representations (for example of \(U_q(\slnn{\infty})\)) makes it impossible to define some of the basic intertwiners.

An alternative, or, more precisely, dual description of the \(\slm\) polynomials has been recently introduced by Cautis, Kamnitzer and Morrison \cite{CKM} using skew-Howe duality. In a Schur-Weyl-flavored process, the intertwiners for the \(U_q(\slm)\) representations involved in the construction of the knot invariants are described as images of elements of a dual quantum group \(U_q(\glk)\). As a consequence, they were able to give the first complete description by generators and relations of the monoidal subcategory of \(U_q(\slm)\) representations generated by exterior powers of the vector representation. A similar dual description using skew Howe duality is possible also for the Alexander polynomial, see \cite{Grant} and \cite{miophd2}. In both cases, the intertwiners actually live in the Schur algebra associated to the quantum group \(U_q(\glk)\).

In this paper, the first of a series of two \cite{QS2}, we introduce a doubling of this algebra specifically designed to provide a unified representation theoretical setting for the Homfly-Pt polynomial and all its specializations. This method is likely to give a powerful approach to the investigations on spectral sequences categorifying this phenomenon. The dual approach we adopt here should more generally shed light on representation-theory-based categorifications of the Homfly-Pt polynomial and of the Alexander polynomial. The latter could be particularly interesting for finding a link with Heegaard-Floer homology.
This first short paper is devoted to defining the doubled Schur algebra \(\UzklB\) and giving a minimal working example of the associated link invariants, entirely in the Howe dual context.

We use a slightly modified version of the usual Homfly-Pt polynomial, which is the two-variable framed  link invariant defined by the  skein formula
\[
\begin{tikzpicture}[scale=.75,anchorbase]
\draw [thick,->] (0,0) -- (1,1);
\draw [thick] (1,0) -- (.6,.4);
\draw [thick,->] (.4,.6) -- (0,1);
\end{tikzpicture}
\;-\;
\begin{tikzpicture}[scale=.75,anchorbase]
\draw [thick,->] (1,0) -- (0,1);
\draw [thick] (0,0) -- (.4,.4);
\draw [thick,->] (.6,.6) -- (1,1);
\end{tikzpicture}
\; = \; (q^{-1}-q)\;\; 
\begin{tikzpicture}[scale=.75,anchorbase]
\draw [thick,->] (0.8,0) .. controls ++(-0.2,0.3) and ++(-0.2,-0.3) .. ++(0,1);
\draw [thick,->] (0,0) .. controls ++(0.2,0.3) and ++(0.2,-0.3) .. ++(0,1);
\end{tikzpicture}
\]
and the following value for the unknot:
\begin{equation*}
\begin{tikzpicture}[scale=.75,anchorbase]
\draw [thick,directed=0.5] (0,0) circle (0.6 cm);
\end{tikzpicture}
\; = \frac{q^{\beta}-q^{-\beta}}{q-q^{-1}},
\end{equation*}
where \(q^{\beta}\) is a formal parameter.
A reduced version version of this polynomial can be obtained by dividing by the value of the unknot. Both the \(\slm\) invariants and the Alexander polynomials can be obtained as specializations of the Homfly-Pt polynomial.

We develop a process to assign to the closure \(\hat{\boldsymbol{b}}\) of a braid \(\boldsymbol{b}\) an element \(P(\boldsymbol{b})\) of the doubled Schur algebra \(\UzklB\). Depending on the value of \(\beta\), we recover either the Homfly-Pt polynomial or the Reshetikhin-Turaev invariant:

\begin{varthm}[Theorem~\ref{thm:HomflyPt}]
If \(\beta\) is generic, then \(P(\boldsymbol{b})\) is the Homfly-Pt polynomial of the link \(\hat{\boldsymbol{b}}\). If \(\beta = m \in \Z\) with \(m \geq 2\), then \(P(\boldsymbol{b})\) is the \(\mathfrak{sl}_m\) link invariant of \(\hat{\boldsymbol{b}}\).
\end{varthm}

The Alexander polynomial can also be obtained by considering a suitable version of the doubled Schur algebra:

\begin{varthm}[Theorem~\ref{prop:Alexander}]
Let \(\boldsymbol{b}\) be a braid whose closure is a link \(K\). Let \(\tilde{\boldsymbol{b}}\) denote the closure of all but one strands of \(\boldsymbol{b}\). Then the image of \(\tilde{\boldsymbol{b}}\) in \(\End_{\Uzerokk{k+k-1}}(\onell{(0,\dots, 0,1,0,\dots, \beta-0,\dots)})\), where the \(1\) appears in the same position as the open strand, is the Alexander polynomial of \(K\).
\end{varthm}

Finally, the same process in the  cases \(\beta\) generic or \(\beta=m \geq 2\) yields the reduced version of the corresponding polynomials, giving us a unified quantum group setting for all these polynomials and their interactions.

\begin{varthm}[Proposition~\ref{prop:reducedSpecialization}]
Let \(K\) be a link, and let \(\tilde{\boldsymbol{b}}\) be as in Theorem~\ref{prop:Alexander}. Then
\begin{enumerate}[(i)]
\item the image of \(\tilde{\boldsymbol{b}}\)
  in
  \(\End_{\UzmmB{(k)+(k-1)}}(\onell{(0,\dots, 0,1,0,\dots,
    \beta-0,\dots,\beta-0)})\),
   gives the
  reduced Homfly-Pt polynomial of \(K\), and
\item  the image of \(\tilde{\boldsymbol{b}}\) 
 in
  \(\End_{\U_q(\gl_{(k)+(k-1)})_{\beta=m}}(\onell{(0,\dots,
    0,1,0,\dots, \beta-0,\dots,\beta-0)})\)
  gives the reduced \(\slm\) invariant of \(K\).
\end{enumerate}
\end{varthm}

We save for the sequel \cite{QS2} the investigation of the representation-theoretical version of this story and its consequences, as well as the full generalization of the invariants to tangles, which best comes after exploring the formal properties of the doubled Schur algebra. In particular, our construction allows us to use both exterior powers of vector representations and their duals in skew-Howe duality, and gives unifying results about the \(\glnn{m|n}\) super-invariants.

\vspace{1em}
\noindent {\bf Acknowledgements:} We would like to thank Christian Blanchet, Mikhail Khovanov, Aaron Lauda, David Rose, Peng Shan and Catharina Stroppel for helpful discussions. H.Q. was funded by ARC DP 140103821.

%
\section{Doubled Schur algebra}
\label{sec:DbleQGroup}
%

In the following, by \(\beta\) we will denote either an integer number or a formal variable (in the last case, we will say that \(\beta\) is \emph{generic}).
Let \(\C(q)\) denote the
field of rational functions in the variable \(q\). We will work over the field \(\C(q,q^{\beta})\).
If \(\beta \in \Z\) then \(q^\beta \in \C(q)\) and we have \(\C(q,q^\beta) = \C(q)\).
 If \(\beta\) is generic, we let \(\C(q,q^\beta)\) be a transcendental extension of \(\C(q)\) by the element \(q^\beta\).
We will allow natural algebraic manipulations, for example we will write \(q^{k+\beta}\) for \(q^k q^\beta\).

Let \(\lattice=\lattice^\beta_{k,l}\) be the set of sequences \(\lambda=(\lambda_1,\dotsc,\lambda_{k+l})\) with \(\lambda_i \in \N\) for \(i\leq k\) and \(\lambda_i \in \beta-\N\) for \(i\geq k+1\). We let also \(\alpha_i = (0,\dotsc,0,1,-1,0,\dotsc,0)\), the entry \(1\) being at position \(i\).

\begin{definition} \label{defn_UzklB}
We define \(\UzklB\) to be the \(\C(q,q^\beta)\)-linear category with:
\begin{itemize}
\item {\bf objects:} \(\onell{\lambda}\) for \(\lambda \in \lattice\) and a zero object;
\item {\bf morphisms:} generated by identity endomorphisms \(\onel\) in \(\Hom(\onell{\lambda},\onell{\lambda})\), and morphisms \(E_i\onel=\onell{\lambda+\alpha_i}E_i \in \Hom(\onel,\onell{\lambda+\alpha_i})\), \(F_i\onel=\onell{\lambda-\alpha_i}F_i  \in \Hom(\onel, \onell{\lambda-\alpha_i})\).
We will often abbreviate by omitting some of the symbols \(\onel\). The morphisms are subject to the following relations:
\begin{enumerate}[itemsep=5pt,topsep=5pt]
\item \label{item:1} \(\onel\onell{\lambda'}=\delta_{\lambda,\lambda'}\onel\);
\item \label{item:4} \([E_i,F_j]\onel=\delta_{i,j}[\lambda_{i}-\lambda_{i+1}]\onel\);
\item \label{item:6} \(\begin{aligned}[t]
E_i^2E_j\onel-(q+q^{-1})E_iE_jE_i\onel+E_jE_i^2\onel& =0  &\text{if } j=i\pm 1,  \\
F_i^2F_j\onel-(q+q^{-1})F_iF_jF_i\onel+F_jF_i^2\onel&=0 &\text{if } j=i\pm 1,  \\
E_iE_j\onel=E_jE_i\onel, \quad F_iF_j\onel = F_jF_i\onel & &\text{if }|i-j|>1. 
\end{aligned}\)
\end{enumerate}
\end{itemize}
\end{definition}

We indifferently use: \(\onell{\lambda+\alpha_i}E_i\onel=E_i\onel=\onell{\lambda+\alpha_i}E_i\), since knowing the source or the target of the \(1\)-morphism is enough to determine the other one. Note that in some of the relations, some weights may not be admissible, while other ones are. The non-admissible ones are then sent to the zero object, and this convention will be used throughout the paper.

One may also introduce the divided powers \(E_i^{(a)}\onel=\frac{1}{[a]!}E_i^{a}\onel\), and similarly for the \(F_i\)'s. As in \cite[\textsection{}7.1.1]{Lus4}, the following higher quantum Serre relation follows from \eqref{item:6} in Definition~\ref{defn_UzklB}: for \(m\), \(n\) with 
\(m \geq n+1\)
we have:
\begin{equation}\label{eq:1}
F_1^{(m)}F_2^{(n)}=\sum_{\substack{r+s=m,\\m- n\leq s\leq m}} \gamma_s F_1^{(r)} F_2^{(n)} F_1^{(s)}
\end{equation}
where
\begin{equation}
 \gamma_s=\sum_{t=0}^{m- n -1}(-1)^{s+1+t}q^{-s( n-m+1+t)} \qbin{s}{t}.\label{eq:2}
\end{equation}
Similarly, we can deduce from \eqref{item:4}: 
\begin{align} \label{eq:higherEF}
E_i^{(a)}F_i^{(b)}\onel=&\sum_{t=0}^{\min\{a,b\}} \qbin{a-b+\lambda_i-\lambda_{i+1}}{ t} F_i^{(b-t)}E_i^{(a-t)}\onel,  \\ F_i^{(b)}E_i^{(a)}\onel=&\sum_{t=0}^{\min\{a,b\}} \qbin{-a+b-\lambda_i+\lambda_{i+1}}{ t} E_i^{(a-t)}F_i^{(b-t)}\onel. \label{eq:18}
\end{align}

One can define an integral version generated by
  the divided powers by replacing \eqref{item:4} with
  \eqref{eq:higherEF}, \eqref{eq:18} and \eqref{item:6} with
  \eqref{eq:1}. Actually, we will use the divided powers for defining the link invariants.

If \(\beta=m \in \N\) then \(\UzklB\) is isomorphic to the dot version  of the quantum enveloping algebra of \(\mathfrak{gl}_m\). In particular, it is non trivial, and one can produce a basis of it. 
This can be used to show that \(\UzklB\) is also non trivial and that \(\UzklB\) contains as a subalgebra \(\U_q(\gl_{k'+l'})_\beta\) for \(k'\leq k\) and \(l \leq l'\).

\begin{remark}
The \(l=0\) specialization of the above definition recovers the usual category \(\Uzk=\bigoplus_{N\geq 0}\calS(k,N)\), where \(\calS(k,N)\) is the quantum Schur algebra of degree \(N\) associated to \(\glk\) (see \cite{MR1920169}).
\end{remark}

\begin{example} \label{Ex_Circle_HomflyPt}
Let us denote by \(\zeroweight\) the element \((0,\dotsc,0,\beta-0,\dotsc,\beta-0) \in \lattice\). We have:
\begin{equation}\label{eq:5}
[E_k,F_k]\onell{\zeroweight}=[0-\beta+0] \onell{\zeroweight},
\end{equation}
and since \(E_kF_k\onell{\zeroweight}=0\) we get 
\begin{equation}\label{eq:4}
F_kE_k\onell{\zeroweight}= \frac{q^{\beta}-q^{-\beta}}{q-q^{-1}}. 
\end{equation}
In the case \(\beta=m>0\), this formula reminds of the value on the unknot for the \(\mathfrak{sl}_m \) link invariant, and in the case \(\beta\) generic for the Homfly-Pt polynomial \cite{Homfly,PT}.
\end{example}

It is easy to check that we have a symmetry in our definition of \(\UzklB\), which will correspond diagrammatically to a rotation of \(180\) degrees:
\begin{lemma} \label{prop:antiinclusion}
There is an anti-isomorphism \(\UzklB\mapsto \UzmmB{l+k}\) given by
\begin{equation}
\begin{gathered}
\onel \mapsto \onell{(\beta-\lambda_{k+l},\dots,\beta-\lambda_{k+1},\beta-\lambda_{k}, \dots, \beta-\lambda_{1})}  \\
E_i \mapsto F_{k+l-i},\qquad F_i\mapsto E_{k+l-i}.
\end{gathered}\label{eq:3}
\end{equation}
\end{lemma}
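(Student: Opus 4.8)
The plan is to define the anti-functor $\Phi$ on the generating data, verify it is compatible with the presentation, and then exhibit a two-sided inverse of the same shape. Write $i' = k+l-i$ for the reversed index and, for $\lambda \in \lattice^\beta_{k,l}$, set $\mu=\mu(\lambda)$ with $\mu_j = \beta - \lambda_{k+l+1-j}$, so that $\Phi(\onell{\lambda}) = \onell{\mu}$, $\Phi(E_i) = F_{i'}$ and $\Phi(F_i) = E_{i'}$. First I would check that $\lambda \mapsto \mu$ is a well-defined bijection $\lattice^\beta_{k,l} \to \lattice^\beta_{l,k}$: the entries $\lambda_{k+1},\dots,\lambda_{k+l} \in \beta-\N$ become the first $l$ coordinates $\mu_1,\dots,\mu_l \in \N$, while $\lambda_1,\dots,\lambda_k \in \N$ become $\mu_{l+1},\dots,\mu_{l+k}\in\beta-\N$, so indeed $\mu \in \lattice^\beta_{l,k}$, and the map is visibly invertible. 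Next, since an anti-functor must reverse source and target, I would verify that $\Phi(\onell{\lambda\pm\alpha_i}) = \onell{\mu \pm \alpha_{i'}}$: the sign in front of $\alpha$ is preserved because both the coordinatewise map $x\mapsto\beta-x$ and the index reversal flip the $\pm1$ pattern, and the two flips cancel. Consequently $E_i\colon \onell{\lambda}\to\onell{\lambda+\alpha_i}$ is sent to a morphism $F_{i'}\colon\onell{\mu+\alpha_{i'}}\to\onell{\mu}$ with the correct endpoints, and symmetrically for $F_i$.

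The core step is to show that the defining relations \eqref{item:1}, \eqref{item:4} and \eqref{item:6} are preserved, keeping in mind that an anti-functor reverses the order of every composite. Relation \eqref{item:1} is immediate from the bijectivity of $\lambda\mapsto\mu$. For \eqref{item:4}, reversing order in $E_iF_j - F_jE_i$ together with the substitution $E\leftrightarrow F$ turns $[E_i,F_j]\onell{\lambda}$ into $[E_{j'},F_{i'}]\onell{\mu}$; when $i\neq j$ we have $i'\neq j'$ and both sides vanish, while for $i=j$ the target relation gives $[\mu_{i'}-\mu_{i'+1}]\onell{\mu}$, and a direct computation yields $\mu_{i'}-\mu_{i'+1}=(\beta-\lambda_{i+1})-(\beta-\lambda_{i})=\lambda_i-\lambda_{i+1}$, so the $\beta$'s cancel and the eigenvalue matches $\Phi$ of the right-hand side. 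For \eqref{item:6}, the combined effect of order-reversal and the swap $E_i\mapsto F_{i'}$ carries the quantum Serre relation among the $E$'s to the quantum Serre relation among the $F$'s (and conversely), and since $|i-j|$ is invariant under $i\mapsto i'$ the conditions $j=i\pm1$ and $|i-j|>1$ are respected; the far-commutativity relations are treated identically. This shows $\Phi$ descends to a well-defined anti-homomorphism of categories.

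Finally, to upgrade $\Phi$ to an anti-isomorphism I would apply the same recipe to build $\Psi\colon\UzmmB{l+k}\to\UzklB$. The composite $\Psi\circ\Phi$ is then an honest (covariant) functor, and on objects it sends $\lambda$ to the weight whose $j$-th entry is $\beta-\mu_{k+l+1-j}=\beta-(\beta-\lambda_j)=\lambda_j$, while on generators $E_i\mapsto F_{i'}\mapsto E_{(i')'}=E_i$; hence $\Psi\circ\Phi=\id$, and symmetrically $\Phi\circ\Psi=\id$, so $\Phi$ is bijective. I expect no serious obstacle: the argument is essentially bookkeeping with the index reversal $i\mapsto k+l-i$ and the coordinate reversal $\lambda\mapsto\mu$, and the one genuinely substantive point is the cancellation $\mu_{i'}-\mu_{i'+1}=\lambda_i-\lambda_{i+1}$ that makes the Cartan-type relation \eqref{item:4} transform correctly; everything else works because the $E$- and $F$-Serre relations, and likewise the commutation relations, are interchanged by the simultaneous order-reversal and $E\leftrightarrow F$ swap.
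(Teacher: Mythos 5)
Your proof is correct, and in fact the paper offers no proof at all for this lemma (it is introduced with ``it is easy to check''), so your direct verification on generators and relations is precisely the intended argument. The key points — that $\lambda\mapsto\mu$ with $\mu_j=\beta-\lambda_{k+l+1-j}$ is a bijection $\lattice^\beta_{k,l}\to\lattice^\beta_{l,k}$, that $\mu(\lambda+\alpha_i)=\mu(\lambda)+\alpha_{k+l-i}$ so sources and targets match under order reversal, that $\mu_{i'}-\mu_{i'+1}=\lambda_i-\lambda_{i+1}$ makes relation \eqref{item:4} transform correctly, and that order reversal interchanges the $E$- and $F$-Serre relations — are all checked accurately, and the two-sided inverse argument closes the proof.
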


By precomposing this isomorphism with the obvious map \(\Uzk\mapto \UzklB\) which sends the generators of \(\Uzk\) to the first \(k-1\) generators of \(\UzklB\), we also get a map \(\Uzl \mapto \UzklB\) hitting the last \(l-1\) generators.

%
\section{Homfly-Pt polynomial from the doubled Schur algebra} 
%

The Homfly-Pt polynomial \cite{Homfly,PT} is a two-variable generalization of both the \(\sln\) polynomials and the Alexander polynomial. Although it appears as some generic version of the \(\sln\) polynomials, or maybe some limit version for \(n \rightarrow \infty\), there is no convenient description of it as an element of the intertwining algebra of \(U_q(\slnn{\infty})\) or some suitable quotient of it. Our goal here is, by passing to the Howe dual of the quantum group, to obtain a unified definition of the Homfly-Pt polynomials together with its specializations in a quantum group setting.

%
\subsection{A glimpse of diagrammatic calculus: ladder webs}
%

Based on the usual \(\sln\) case \cite{CKM}, it will be convenient to perform  some computations diagrammatically. We will assign to the generators of our category elementary \emph{ladder webs}, that is, trivalent graphs with labeled edges and some rigid properties, as follows:
\begin{gather*}
\onel \mapsto 
\begin{tikzpicture}[xscale=.75,yscale=1.5,anchorzero]
\draw [thick,->] (0,-.2) node[below] {\tiny \(\lambda_1\)} -- (0,.2);
\draw [thick,->] (.5,-.2) node[below] {\tiny \(\lambda_2\)} -- (.5,.2);
\node at (1,0) {\(\cdots\)};
\draw [thick,->] (1.5,-.2) node[below] {\tiny \(\lambda_{k+l}\)} -- (1.5,.2);
\end{tikzpicture}\\
\onell{\lambda+\alpha_i}E_i\onel \mapsto
\begin{tikzpicture}[scale=.75,baseline={([yshift=-0.5ex]0,0.375)}]
\draw [thick,->] (0,0) -- (0,1);
\node at (.5,.5) {\(\cdots\)};
\draw [thick,->] (1,0) -- (1,1);
\draw [thick,->] (1.75,0) -- (1.75,1);
\node at (2.25,.5) {\(\cdots\)};
\draw [thick,->] (2.75,0) -- (2.75,1);
\draw [directed=.5, thick] (1.75,.3) -- (1,.7);
\node at (0,-.3) {\tiny \(\lambda_1\)};
\node at (1,-.3) {\tiny \(\lambda_i\)};
\node at (1.75,-.3) {\tiny\(\lambda_{i+1}\)};
\node at (2.75,-.3) {\tiny\(\lambda_{k+l}\)};
\end{tikzpicture}\qquad
\onell{\lambda-\alpha_i}F_i\onel \mapsto
\begin{tikzpicture}[scale=.75,baseline={([yshift=-0.5ex]0,0.375)}]
\draw [thick,->] (0,0) -- (0,1);
\node at (.5,.5) {\(\cdots\)};
\draw [thick,->] (1,0) -- (1,1);
\draw [thick,->] (1.75,0) -- (1.75,1);
\node at (2.25,.5) {\(\cdots\)};
\draw [thick,->] (2.75,0) -- (2.75,1);
\draw [directed=.5, thick] (1,.3) -- (1.75,.7);
\node at (0,-.3) {\tiny \(\lambda_1\)};
\node at (1,-.3) {\tiny \(\lambda_i\)};
\node at (1.75,-.3) {\tiny \(\lambda_{i+1}\)};
\node at (2.75,-.3) {\tiny \(\lambda_{k+l}\)};
\end{tikzpicture}
\end{gather*}
Diagrammatic versions of the relations of Definition~\ref{defn_UzklB} hold for these diagrams.
We will often omit the edges labeled \(0\), or depict them with dots. We will also sometimes orient edges labeled with \(\beta-\N\) downward, and also depict the \(\beta-0\) edges dotted.
Although these graphs have an interesting topological behavior, we will here mostly use them as a shorthand for actual computations in the (doubled) Schur algebra. A result of \cite{CKM,QR} we will however use with moderation is that isotopies can be performed as long as we stay only on the left side of the picture, corresponding to the \(k\)-th first uprights. Then Proposition~\ref{prop:antiinclusion} implies that the same holds true for the last \(l\)-th uprights, leaving the problem at the interface between the \(\N\) and the \(\beta-\N\) labels.

Another trick we'll use from time to time, and which diagrammatically is extremely transparent, is to inject \(\UzklB\) into \(\UzmmB{(k+r)+(l+s)}\), sending a weight \(\lambda\)  to \((0,\dots,0,\lambda_1,\dots,\lambda_{k+l},\beta-0,\dots,\beta-0)\). This allows to create more space to perform isotopies when needed.

%
\subsection{A link invariant}
%

Let \(\boldsymbol{b}\) be a braid with \(k\) strands; then one can assign to it an element of \(\Uzk\) using the following local rule (see \cite{Lus4}):
\begin{align}
\begin{tikzpicture}[scale=.7,anchorbase]
\draw [thick,->] (0,0) -- (1,1);
\draw [thick] (1,0) -- (.6,.4);
\draw [thick,->] (.4,.6) -- (0,1);
\end{tikzpicture}
& \mapsto q^{-1}\onell{(1,1)}-EF\onell{(1,1)}\\[3pt]
\begin{tikzpicture}[scale=.7,anchorbase]
\draw [thick,->] (1,0) -- (0,1);
\draw [thick] (0,0) -- (.4,.4);
\draw [thick,->] (.6,.6) -- (1,1);
\end{tikzpicture}
& \mapsto -EF\onell{(1,1)}+q\onell{(1,1)}
\end{align}
Note that above, we focused on a \(2\)-strand braid. If the \(i\)-th and \((i+1)\)-st strands are involved, one  uses the Chevalley generators \(E_i\) and \(F_i\). As one reads the braid from bottom to top, one composes the quantum group expression from right to left (as function composition).

This element of \(\Uzk\) can be closed in \(\UzmmB{k+k}\) as follows:
\begin{equation}\label{eq:19}
\begin{tikzpicture}[yscale=.45,xscale=.7,anchorbase]
\draw (-1,-0.75) rectangle (1,0.75);
\node at (0,0) {\(\boldsymbol{b}\)};
\draw [thick,directed=.5] (.5,0.75) .. controls ++(0,1) and ++(0,1) .. (1.5,0.75) -- ++(0,-1.5) .. controls ++(0,-1) and ++(0,-1) .. (.5,-0.75);
\draw [thick,directed=.5] (0,0.75) .. controls ++(0,1.5) and ++(0,1.5) .. (2,0.75) -- ++(0,-1.5) .. controls ++(0,-1.5) and ++(0,-1.5) .. (0,-0.75);
\draw [thick,directed=.5] (-.5,0.75) .. controls ++(0,2) and ++(0,2) .. (2.5,0.75) -- ++(0,-1.5) .. controls ++(0,-2) and ++(0,-2) .. (-0.5,-0.75);
\end{tikzpicture}
\end{equation}
where the cups and caps are realized using the following elementary pattern:
\begin{equation}\label{eq:15}
\begin{tikzpicture}[scale=.75,anchorbase]
\draw [thick,directed=1] (0,0) -- (0,1.5) -- (1,1.5) -- (1,2); 
\draw [thick, directed=.55] (1,0) -- (1,.5) -- (2,.5) -- (2,0);
\draw [thick, directed=1] (2,2) -- (2,1.5) -- (3,1.5) -- (3,0);
\draw [dotted, directed=1] (0,1.5) -- (0,2);
\draw [dotted, directed=.55] (1,.5) -- (1,1.5);
\draw [dotted, directed=.55] (2,1.5) -- (2,.5);
\draw [dotted, directed=.55] (3,2) -- (3,1.5);
\node at (0,-.3) {\tiny \(1\)};
\node at (1,-.3) {\tiny \(1\)};
\node at (2,-.3) {\tiny \(\beta-1\)};
\node at (3,-.3) {\tiny \(\beta-1\)};
\end{tikzpicture}
\end{equation}

If \(\boldsymbol{b}\) is a braid, let us denote by  \(P(\boldsymbol{b})\) the associated element in \(\UzmmB{k+k}\) that we obtain through this process. We can now state our first result:

\begin{theorem}
The element \(P(\boldsymbol{b})\) is a framed invariant of the link \(\hat{\boldsymbol{b}}\), the closure of \(\boldsymbol{b}\).
\end{theorem}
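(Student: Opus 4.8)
The plan is to invoke (the framed version of) Markov's theorem and reduce the statement to the behavior of $P(\boldsymbol{b})$ under the Markov moves. First I would record the preliminary point that $P$ is well defined on the braid group: the local assignment $\sigma_i^{\pm 1}\mapsto q^{\mp 1}\onel-E_iF_i$ sends $\sigma_i$ and $\sigma_i^{-1}$ to mutually inverse elements (this is Reidemeister~II, and amounts to the quadratic relation $(E_iF_i)^2=(q+q^{-1})E_iF_i$ on the relevant weight space) and satisfies the braid relations (Reidemeister~III); this is exactly Lusztig's quantum braid group action \cite{Lus4}. Hence the assignment factors through $B_k$, so $P(\boldsymbol{b})\in\End_{\UzmmB{k+k}}(\onell{\zeroweight})$ depends only on $\boldsymbol{b}\in B_k$, and the Reidemeister moves internal to the braid box are already accounted for. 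It then remains to compare different braids whose closures are framed-isotopic.

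The heart of the argument is invariance under \emph{conjugation} (Markov~I): I would show $P(\boldsymbol{a}\boldsymbol{b})=P(\boldsymbol{b}\boldsymbol{a})$, so that $P$ depends only on the conjugacy class of $\boldsymbol{b}$. Diagrammatically this is the statement that in the closed diagram \eqref{eq:19} a generator can be slid off the top of the braid box, transported around the closure arc built from the cups and caps \eqref{eq:15}, and reinserted at the bottom. On the first $k$ uprights (the $\N$-labeled strands) this sliding is a legitimate isotopy by the results of \cite{CKM,QR}, and on the last $k$ uprights (the $\beta-\N$-labeled strands) it follows from the same results transported through the anti-isomorphism of Lemma~\ref{prop:antiinclusion}.

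The remaining move is \emph{stabilization} (Markov~II), $\boldsymbol{b}\in B_k\mapsto\boldsymbol{b}\sigma_k^{\pm 1}\in B_{k+1}$, which forces a comparison of the two ambient algebras $\UzmmB{k+k}$ and $\UzmmB{(k+1)+(k+1)}$; here I would use the injection $\UzklB\hookrightarrow\UzmmB{(k+r)+(l+s)}$ described after Lemma~\ref{prop:antiinclusion} to place both elements in a common algebra. Closing the extra strand carrying a single crossing produces a local curl, and a bubble computation of the flavour of Example~\ref{Ex_Circle_HomflyPt} evaluates it to a scalar multiple of $P(\boldsymbol{b})$. Since we only assert a \emph{framed} invariant, we do not require invariance under Reidemeister~I: stabilization is allowed to multiply $P$ by this curl factor, which is precisely the framing-change scalar, and because the positive and negative curls contribute inverse scalars the framing-preserving composites cancel, so $P$ descends to the framed-isotopy class of $\hat{\boldsymbol{b}}$.

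The hard part will be the conjugation step at the \emph{interface} between the $\N$-labeled and the $\beta-\N$-labeled uprights. The isotopy results of \cite{CKM,QR} apply cleanly only on each half separately, so sliding a crossing past the turn-backs of \eqref{eq:15} that join the two halves cannot be read off as a topological move and must be checked algebraically, directly from the relations \eqref{item:4} and \eqref{item:6} of $\UzmmB{}$ in their divided-power forms \eqref{eq:higherEF}--\eqref{eq:1}. Controlling this interface is exactly the new phenomenon introduced by the doubling, and it is where I expect the bulk of the computation to concentrate.
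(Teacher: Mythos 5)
Your plan follows essentially the same route as the paper: braid invariance via the quadratic relation $(E_iF_i)^2=(q+q^{-1})E_iF_i\onell{(1,1)}$ and the Yang--Baxter property (the paper's Lemma~\ref{P_invariance}), conjugation invariance by sliding generators around the closure, and evaluation of the stabilization curl to the framing scalars $q^{\mp\beta}$. You have also correctly located the only genuinely new difficulty, namely transporting a rung past the turn-backs of \eqref{eq:15} at the interface between the $\N$-labeled and $(\beta-\N)$-labeled uprights, where the isotopy results of \cite{CKM,QR} no longer apply.

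The one substantive reservation is that this interface step is announced but not carried out, and it is precisely where the paper's entire proof of Markov~I lives. The paper decomposes the wrap-around move into five elementary moves and verifies each algebraically: the key identities are of the form $F_1^{(k)}F_2^{(k-1)}\onell{(k,\beta-1,\beta-k+1)}=F_1^{(k-1)}F_2^{(k-1)}F_1\onell{(k,\beta-1,\beta-k+1)}$ and $F_2^{(v+1)}F_1\onel=F_2F_1F_2^{(v)}\onel$, both proved from the higher quantum Serre relation \eqref{eq:1} by exploiting which terms vanish on the given weight, the latter requiring in addition an induction establishing $\sum_{r=0}^{v}(-1)^rq^{-(v-1)r}\qbin{v}{r}=0$. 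So your outline is the right one and nothing in it would fail, but as written it defers rather than supplies the computation that constitutes the proof; to complete it you would need to produce these divided-power identities (or equivalents) explicitly.
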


The proof of the theorem consists in checking that \(P(\boldsymbol{b})\) is a braid invariant, and it is invariant under the Markov moves. We will do this in the following lemmas.

\begin{lemma}\label{P_invariance}
\(P(\boldsymbol{b})\) is a braid invariant.
\end{lemma}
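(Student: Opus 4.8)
The plan is to show that the crossing assignment defines a representation of the braid group $B_k$ into the unit group of the endomorphism algebra of $\onell{(1,\dots,1)}$ in $\Uzk$; since the closure operation \eqref{eq:19} is a fixed $\C(q,q^\beta)$-linear map $\Uzk \to \UzmmB{k+k}$, braid invariance of $P(\boldsymbol{b})$ will follow at once from braid invariance of the underlying element of $\Uzk$. Writing $\checkR_i^{+} = q^{-1}\onel - E_iF_i$ and $\checkR_i^{-} = q\,\onel - E_iF_i$ for the morphisms attached to the positive and negative elementary crossings on the strands $i,i+1$ (acting on the weight where all strands carry the label $1$), it suffices to verify three families of local identities: that $\checkR_i^{+}$ and $\checkR_i^{-}$ are mutually inverse (Reidemeister~II, invertibility of crossings); that $\checkR_i^{\pm}$ and $\checkR_j^{\pm}$ commute for $|i-j|>1$ (far commutativity); and that $\checkR_i^{+}\checkR_{i+1}^{+}\checkR_i^{+} = \checkR_{i+1}^{+}\checkR_i^{+}\checkR_{i+1}^{+}$ (the braid relation, Reidemeister~III).

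First I would dispose of the two easy identities. Far commutativity is immediate from Definition~\ref{defn_UzklB}: for $|i-j|>1$ relation~\eqref{item:4} gives $[E_i,F_j]\onel = 0$ and relation~\eqref{item:6} gives $E_iE_j\onel = E_jE_i\onel$ and $F_iF_j\onel = F_jF_i\onel$, so $E_iF_i$ commutes with $E_jF_j$ and hence $\checkR_i^{\pm}$ commutes with $\checkR_j^{\pm}$. For invertibility, on the weight $(1,1)$ in positions $i,i+1$ one has $E_i^2\onell{(1,1)} = F_i^2\onell{(1,1)} = 0$ because the intermediate weights are non-admissible, while $[E_i,F_i]\onell{(1,1)} = [0]\onell{(1,1)} = 0$ by \eqref{item:4}; a short computation then yields the quadratic relation $(E_iF_i)^2\onell{(1,1)} = (q+q^{-1})\,E_iF_i\onell{(1,1)}$, whence $\checkR_i^{+}\checkR_i^{-}\onell{(1,1)} = \onell{(1,1)} - (q+q^{-1})E_iF_i\onell{(1,1)} + (E_iF_i)^2\onell{(1,1)} = \onell{(1,1)}$, and symmetrically $\checkR_i^{-}\checkR_i^{+} = \onel$. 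This is the algebraic incarnation of the skein relation $\checkR_i^{+} - \checkR_i^{-} = (q^{-1}-q)\onel$ from the introduction.

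The heart of the argument is the braid relation, which is the Yang--Baxter equation for these operators. Using far commutativity I would reduce it to the three-strand case, namely an identity between endomorphisms of $\onell{(1,1,1)}$ built from $\checkR_1^{\pm}$ and $\checkR_2^{\pm}$. I would then verify $\checkR_1^{+}\checkR_2^{+}\checkR_1^{+} = \checkR_2^{+}\checkR_1^{+}\checkR_2^{+}$ by expanding both sides and repeatedly applying the mixed commutators $[E_1,F_2]\onel = [E_2,F_1]\onel = 0$, the diagonal relation $[E_i,F_i]\onell{\mu} = [\mu_i-\mu_{i+1}]\onell{\mu}$, and the quantum Serre relations~\eqref{item:6}, while carefully tracking the intermediate weight $\mu$ through each term (the scalars $[\mu_i-\mu_{i+1}]$ depend on it). Equivalently, and more transparently, the same identity can be checked in the ladder-web calculus: since at this stage every strand carries the label $1$, the entire computation takes place on the $\N$-labelled side, where isotopies and the web-theoretic Reidemeister~III move are available by \cite{CKM,QR}.

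I expect the braid relation to be the only genuine obstacle. The other two identities are formal consequences of the defining relations, whereas the braid relation is a true Yang--Baxter computation, and the bookkeeping of the weight-dependent coefficients $[\mu_i-\mu_{i+1}]$ across the many terms produced by expanding the triple products is precisely where power and sign errors tend to appear. Once invertibility and the braid relations are in hand, the map $\boldsymbol{b}\mapsto P(\boldsymbol{b})$ factors through the braid group, which is exactly the content of Lemma~\ref{P_invariance}.
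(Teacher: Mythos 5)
Your proposal is correct and follows essentially the same route as the paper: the paper likewise reduces the claim to invertibility of the crossings, far commutativity, and the braid relation, verifies Reidemeister~II by exactly the computation you give (using \((E_iF_i)^2\onell{(1,1)}=[2]E_iF_i\onell{(1,1)}\)), and disposes of the braid relation by appealing to the Yang--Baxter equation from \cite{Lus4} rather than expanding it by hand.
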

\begin{proof}
This is a direct consequence of the fact that the braiding verifies the Yang-Baxter equation \cite{Lus4}. Actually, the result can also be proven by direct computation, checking that positive and negative crossings yield inverse elements, and the braiding relation is respected. For example, we have
\begin{align*}
\begin{tikzpicture}[yscale=.5,anchorbase]
\draw [thick,->] (.5,0) .. controls (.5,.5) and (0,.5) .. (0,1) .. controls (0,1.5) and (.5,1.5) .. (.5,2);
\draw [thick,->,cross line] (0,0) .. controls (0,.5) and (.5,.5) .. (.5,1) .. controls (.5,1.5) and (0,1.5) .. (0,2);
\end{tikzpicture}
&\mapsto
(-EF\onell{(1,1)}+q\onell{(1,1)})(q^{-1}\onell{(1,1)}-EF\onell{(1,1)})\\
&\qquad =\onell{(1,1)}-(q+q^{-1})EF\onell{(1,1)}+EFEF\onell{(1,1)} \\
&\qquad =\onell{(1,1)}-(q+q^{-1})EF\onell{(1,1)}+[2]EF\onell{(1,1)} \\
&\qquad =\onell{(1,1)}\qedhere
\end{align*}
\end{proof}

\begin{lemma} \label{lemma:Markov1}
\(P(\boldsymbol{b})\) is invariant under the first Markov's move.
\end{lemma}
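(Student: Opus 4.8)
The first Markov's move is conjugation, $\boldsymbol{b} \leftrightarrow \boldsymbol{c}\boldsymbol{b}\boldsymbol{c}^{-1}$ in the braid group on $k$ strands, so the plan is to show $P(\boldsymbol{c}\boldsymbol{b}\boldsymbol{c}^{-1}) = P(\boldsymbol{b})$. First I would record, using Lemma~\ref{P_invariance}, that the assignment of a $k$-strand braid to an element of $\Uzk$ is multiplicative and sends inverse braids to inverse elements; hence $\boldsymbol{c}\boldsymbol{b}\boldsymbol{c}^{-1}$ maps to $CBC^{-1}$, where $C$ and $B$ are the (invertible) images of $\boldsymbol{c}$ and $\boldsymbol{b}$ in $\End(\onell{(1,\dots,1)})$. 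Writing $\tau\colon \Uzk \to \UzmmB{k+k}$ for the closure map of~\eqref{eq:19}, it then suffices to prove that $\tau$ is cyclic, $\tau(XY)=\tau(YX)$: indeed $\tau(CBC^{-1}) = \tau(C^{-1}CB) = \tau(B) = P(\boldsymbol{b})$.

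To prove cyclicity it is enough to move a single elementary crossing $\sigma_i^{\pm1}$ from one end of a stacked product around the closure to the other end, and then induct on the word length of the braid being moved. Diagrammatically, in~\eqref{eq:19} this means taking a crossing on the $i$-th and $(i+1)$-st of the first $k$ uprights at the top of the box $\boldsymbol{b}$, sliding it up into the top cups, across the $\beta-\N$-labelled return uprights, down through the bottom caps, and back onto the first $k$ uprights at the bottom of the box, where it becomes adjacent to---and cancels, by Lemma~\ref{P_invariance}---the matching factor coming from $C^{-1}$.

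The sliding itself is pure isotopy. On the first $k$ uprights the isotopy invariance of webs is provided by~\cite{CKM,QR}, and Lemma~\ref{prop:antiinclusion} transports the same freedom to the last $k$ (the $\beta-\N$-labelled) uprights; so the crossing glides freely except when it must cross the elementary cup/cap~\eqref{eq:15} joining the two regions.

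This interface is exactly where the difficulty lies, since~\cite{CKM,QR} say nothing about it. The hard part will be to verify by hand that a single $E_i$- or $F_i$-vertex may be rotated past the cup and the cap of~\eqref{eq:15}: rotating a vertex through $180^\circ$ across a cup interchanges $E_i$ and $F$ on the reflected strands exactly as in Lemma~\ref{prop:antiinclusion}, while passing through a cup and then the matching cap returns the vertex unchanged by the zig-zag identity built into~\eqref{eq:15}. Each of these is a finite computation at a single pair of adjacent strands using only the defining relations~\eqref{item:1}--\eqref{item:6}. Granting the interface move, the transported crossing arrives unchanged, $C$ and $C^{-1}$ cancel, and we are left with $\tau(B) = P(\boldsymbol{b})$, as required.
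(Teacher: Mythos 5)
Your reduction is the same as the paper's: conjugation invariance follows once one shows the closure map of~\eqref{eq:19} is trace-like, i.e.\ that an elementary piece of the braid can be slid from the top of the box, around the cups, across the \(\beta-\N\)-labelled uprights, through the caps, and back to the bottom. You also correctly locate the only real obstruction at the interface between the \(\N\)- and \(\beta-\N\)-labelled uprights, where the isotopy results of~\cite{CKM,QR} (transported to the right half by Lemma~\ref{prop:antiinclusion}) no longer apply. Up to that point your argument agrees with the paper's.

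The gap is that the interface step is asserted rather than proved, and the assertion leans on two things that do not exist yet. First, there is no ``zig-zag identity built into~\eqref{eq:15}'': equation~\eqref{eq:15} merely \emph{defines} the cup/cap as a specific composite of divided powers \(E_i^{(a)}\), \(F_i^{(a)}\), and the statement that a vertex passes through this composite unchanged is precisely the content of the lemma. Second, the computation is not ``a finite computation at a single pair of adjacent strands using only the defining relations'': the rungs being slid carry arbitrary labels (so one must work with divided powers \(F_1^{(k)}\), \(F_2^{(v+1)}\), etc.), the moves involve three adjacent uprights at once, and the verification requires the higher quantum Serre relation~\eqref{eq:1}, the divided-power commutation formulas~\eqref{eq:higherEF}, and two separate inductions --- one on the label \(v\) to establish \(F_2^{(v+1)}F_1\onell{(t,v,\beta-v-1)}=F_2F_1F_2^{(v)}\onell{(t,v,\beta-v-1)}\), and a further induction to prove the quantum binomial identity \(\sum_{r=0}^{v}(-1)^r q^{-(v-1)r}\qbin{v}{r}=0\) that makes the resulting coefficient equal to \(1\). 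The paper carries this out as five explicit local moves; your proposal defers all of them. Until those identities are actually verified in \(\UzmmB{k+k}\), the cyclicity of the closure map --- and hence the lemma --- is not established.
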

\begin{proof}
The first Markov's move can be decomposed as a sequence of moves symmetric to the following one: 
\[
\begin{tikzpicture}[scale=.5,baseline={([yshift=-0.5ex]0,0.75)}]
\draw [thick, directed=1] (0,0)  -- (0,1.5) -- (1,1.5) -- (1,2) -- (2,2) -- (2,1.5) -- (3,1.5) -- (3,0);
\draw [thick, directed=1] (1,0) -- (1,1) -- (2,1) -- (2,0);
\draw [thick, directed=.55] (0,.5) -- (1,.5);
\draw [dotted, directed=1] (0,1.5) -- (0,2.5);
\draw [dotted, directed=1] (1,2) -- (1,2.5);
\draw [dotted, directed=.55] (2,2.5) -- (2,2);
\draw [dotted, directed=.55] (3,2.5) -- (3,1.5);
\node [below] at (0,0) {\rotatebox{90}{\tiny \(t\)}};
\node [below] at (1,0) {\rotatebox{90}{\tiny \(v\)}};
\node [below] at (2,0) {\rotatebox{90}{\tiny \(\beta-v-1\)}};
\node [below] at (3,0) {\rotatebox{90}{\tiny \(\beta-t+1\)}};
\end{tikzpicture}
\quad
=
\quad
\begin{tikzpicture}[scale=.5,baseline={([yshift=-0.5ex]0,0.75)}]
\draw [thick, directed=1] (0,0) -- (0,1.5) -- (1,1.5) -- (1,2) -- (2,2) -- (2,1.5) -- (3,1.5) -- (3,0);
\draw [thick, directed=1] (1,0) -- (1,1) -- (2,1) -- (2,0);
\draw [thick, directed=.55] (3,.5) -- (2,.5);
\draw [dotted, directed=1] (0,1.5) -- (0,2.5);
\draw [dotted, directed=1] (1,2) -- (1,2.5);
\draw [dotted, directed=.55] (2,2.5) -- (2,2);
\draw [dotted, directed=.55] (3,2.5) -- (3,1.5);
\node [below] at (0,0) {\rotatebox{90}{\tiny \(t\)}};
\node [below] at (1,0) {\rotatebox{90}{\tiny \(v\)}};
\node [below] at (2,0) {\rotatebox{90}{\tiny \(\beta-v-1\)}};
\node [below] at (3,0) {\rotatebox{90}{\tiny \(\beta-t+1\)}};
\end{tikzpicture}
\]

This move decomposes as follows:

\begin{multline*}
\begin{tikzpicture}[scale=.4,baseline={([yshift=-0.5ex]0,0.6)}]
\draw [thick, directed=1] (0,0) -- (0,1.5) -- (1,1.5) -- (1,2) -- (2,2) -- (2,1.5) -- (3,1.5) -- (3,0);
\draw [thick, directed=1] (1,0) -- (1,1) -- (2,1) -- (2,0);
\draw [thick, directed=.55] (0,.5) -- (1,.5);
\draw [dotted, directed=1] (0,1.5) -- (0,3);
\draw [dotted, directed=1] (1,2) -- (1,3);
\draw [dotted, directed=.55] (2,3) -- (2,2);
\draw [dotted, directed=.55] (3,3) -- (3,1.5);
\node [below] at (0,0) {\rotatebox{90}{\tiny \(t\)}};
\node [below] at (1,0) {\rotatebox{90}{\tiny \(v\)}};
\node [below] at (2,0) {\rotatebox{90}{\tiny \(\beta-v-1\)}};
\node [below] at (3,0) {\rotatebox{90}{\tiny \(\beta-t+1\)}};
\end{tikzpicture}
\;
\xrightarrow{\;\boldsymbol{(5)}\;}
\begin{tikzpicture}[scale=.4,baseline={([yshift=-0.5ex]0,0.6)}]
\draw [thick, directed=1] (0,0) -- (0,2) -- (1,2) -- (1,2.5) -- (2,2.5) -- (2,2) -- (3,2) -- (3,0);
\draw [thick, directed=1] (1,0) -- (1,.5) -- (2,.5) -- (2,0);
\draw [thick, directed=.55] (0,1) -- (1,1) -- (1,1.5) -- (2,1.5) -- (2,.5);
\draw [dotted, directed=1] (0,1.5) -- (0,3);
\draw [dotted, directed=1] (1,2) -- (1,3);
\draw [dotted, directed=.55] (2,3) -- (2,2.5);
\draw [dotted, directed=.55] (3,3) -- (3,2);
\node [below] at (0,0) {\rotatebox{90}{\tiny \(t\)}};
\node [below] at (1,0) {\rotatebox{90}{\tiny \(v\)}};
\node [below] at (2,0) {\rotatebox{90}{\tiny \(\beta-v-1\)}};
\node [below] at (3,0) {\rotatebox{90}{\tiny \(\beta-t+1\)}};
\end{tikzpicture}
\xrightarrow{\;\boldsymbol{(4)}\;}
\;
\begin{tikzpicture}[scale=.4,baseline={([yshift=-0.5ex]0,0.6)}]
\draw [thick, directed=1] (0,0) -- (0,1) -- (1,1) -- (1,2.5) -- (2,2.5) -- (2,2) -- (3,2) -- (3,0);
\draw [thick, directed=1] (1,0) -- (1,.5) -- (2,.5) -- (2,0);
\draw [thick, directed=.3] (1,1.5) -- (2,1.5) -- (2,.5);
\draw [dotted, directed=1] (0,1) -- (0,3);
\draw [dotted, directed=1] (1,2.5) -- (1,3);
\draw [dotted, directed=.55] (2,3) -- (2,2.5);
\draw [dotted, directed=.55] (3,3) -- (3,2);
\node [below] at (0,0) {\rotatebox{90}{\tiny \(t\)}};
\node [below] at (1,0) {\rotatebox{90}{\tiny \(v\)}};
\node [below] at (2,0) {\rotatebox{90}{\tiny \(\beta-v-1\)}};
\node [below] at (3,0) {\rotatebox{90}{\tiny \(\beta-t+1\)}};
\end{tikzpicture}
\\
\xrightarrow{\;\boldsymbol{(3)}\;}
\begin{tikzpicture}[scale=.4,baseline={([yshift=-0.5ex]0,0.6)}]
\draw [thick, directed=1] (0,0) -- (0,1) -- (1,1) -- (1,2) -- (2,2) -- (2,1.5) -- (3,1.5) -- (3,0);
\draw [thick, directed=1] (1,0) -- (1,.5) -- (2,.5) -- (2,0);
\draw [thick, directed=.55] (2,1.5) -- (2,.5);
\draw [dotted, directed=1] (0,1) -- (0,3);
\draw [dotted, directed=1] (1,2) -- (1,3);
\draw [dotted, directed=.55] (2,3) -- (2,2);
\draw [dotted, directed=.55] (3,3) -- (3,1.5);
\node [below] at (0,0) {\rotatebox{90}{\tiny \(t\)}};
\node [below] at (1,0) {\rotatebox{90}{\tiny \(v\)}};
\node [below] at (2,0) {\rotatebox{90}{\tiny \(\beta-v-1\)}};
\node [below] at (3,0) {\rotatebox{90}{\tiny \(\beta-t+1\)}};
\end{tikzpicture}
\;
\xrightarrow{\;\boldsymbol{(2)}\;}
\begin{tikzpicture}[scale=.4,baseline={([yshift=-0.5ex]0,0.6)}]
\draw [thick, directed=1] (0,0) -- (0,1.5) -- (1,1.5) -- (1,2.5) -- (2,2.5) -- (2,2) -- (3,2) -- (3,0);
\draw [thick, directed=1] (1,0) -- (1,.5) -- (2,.5) -- (2,0);
\draw [thick, directed=.55] (3,1) -- (2,1) -- (2,.5);
\draw [dotted, directed=1] (0,1.5) -- (0,3);
\draw [dotted, directed=1] (1,2.5) -- (1,3);
\draw [dotted, directed=.55] (2,3) -- (2,2.5);
\draw [dotted, directed=.55] (3,3) -- (3,2);
\node [below] at (0,0) {\rotatebox{90}{\tiny \(t\)}};
\node [below] at (1,0) {\rotatebox{90}{\tiny \(v\)}};
\node [below] at (2,0) {\rotatebox{90}{\tiny \(\beta-v-1\)}};
\node [below] at (3,0) {\rotatebox{90}{\tiny \(\beta-t+1\)}};
\end{tikzpicture}
 \xrightarrow{\;\boldsymbol{(1)}\;}
\;
\begin{tikzpicture}[scale=.4,baseline={([yshift=-0.5ex]0,0.6)}]
\draw [thick, directed=1] (0,0) -- (0,1.5) -- (1,1.5) -- (1,2) -- (2,2) -- (2,1.5) -- (3,1.5) -- (3,0);
\draw [thick, directed=1] (1,0) -- (1,1) -- (2,1) -- (2,0);
\draw [thick, directed=.55] (3,.5) -- (2,.5);
\draw [dotted, directed=1] (0,1.5) -- (0,2.5);
\draw [dotted, directed=1] (1,2) -- (1,2.5);
\draw [dotted, directed=.55] (2,2.5) -- (2,2);
\draw [dotted, directed=.55] (3,2.5) -- (3,1.5);
\node [below] at (0,0) {\rotatebox{90}{\tiny \(t\)}};
\node [below] at (1,0) {\rotatebox{90}{\tiny \(v\)}};
\node [below] at (2,0) {\rotatebox{90}{\tiny \(\beta-v-1\)}};
\node [below] at (3,0) {\rotatebox{90}{\tiny \(\beta-t+1\)}};
\end{tikzpicture}
\end{multline*}

Move \(\boldsymbol{(1)}\) is a consequence of \(E_3F_2^{(v)}=F_2^{(v)}E_3\), and \(\boldsymbol{(2)}\) is a consequence of \eqref{eq:higherEF}. Move
\(\boldsymbol{(3)}\)  follows from the following local move:
\[
\begin{tikzpicture}[scale=.5,baseline={([yshift=-0.5ex]0,0.5)}]
\draw [thick, directed=1] (0,0) -- (0,1.5) -- (1,1.5) -- (1,1) -- (2,1) -- (2,0);
\draw [thick, directed=1] (0,.5) -- (1,.5) -- (1,0);
\draw [dotted, directed=1] (0,1.5) -- (0,2);
\draw [dotted, directed=.55] (1,2) -- (1,1.5);
\draw [dotted, directed=.55] (2,2) -- (2,1);
\node [below] at (0,0) {\rotatebox{90}{\tiny \(k\)}};
\node [below] at (1,0) {\rotatebox{90}{\tiny \(\beta-1\)}};
\node [below] at (2,0) {\rotatebox{90}{\tiny \(\beta-k+1\)}};
\end{tikzpicture}
\quad
\sim
\quad
\begin{tikzpicture}[scale=.5,baseline={([yshift=-0.5ex]0,0.5)}]
\draw [thick, directed=1] (0,0) -- (0,1.5) -- (1,1.5) -- (1,1) -- (2,1) -- (2,0);
\draw [thick, directed=1] (1,1) -- (1,0);
\draw [dotted, directed=1] (0,1.5) -- (0,2);
\draw [dotted, directed=.55] (1,2) -- (1,1.5);
\draw [dotted, directed=.55] (2,2) -- (2,1);
\node [below] at (0,0) {\rotatebox{90}{\tiny \(k\)}};
\node [below] at (1,0) {\rotatebox{90}{\tiny \(\beta-1\)}};
\node [below] at (2,0) {\rotatebox{90}{\tiny \(\beta-k+1\)}};
\end{tikzpicture}
\]
This corresponds to proving
\[F_1^{(k-1)}F_2^{(k-1)}F_1\onell{(k,\beta-1,\beta-k+1)}=F_1^{(k)}F_2^{(k-1)}\onell{(k,\beta-1,\beta-k+1)}.\]
From \eqref{eq:1} we get
\[
F_1^{(k)}F_2^{(k-1)}=\sum_{r+s=k}(-1)^{s+1}F_1^{(r)}F_2^{(k-1)}F_1^{(s)}.
\]
Unless \(s\leq 1\), the corresponding term acting on \(\onell{(k,\beta-1,\beta-k+1)}\) is zero, and we thus get as desired:
\[
F_1^{(k)}F_2^{(k-1)}\onell{(k,\beta-1,\beta-k+1)} =F_1^{(k-1)}F_2^{(k-1)}F_1\onell{(k,\beta-1,\beta-k+1)}.
\]

Move \(\boldsymbol{(4)}\) locally reduces to:
\[
\begin{tikzpicture}[scale=.5,baseline={([yshift=-0.5ex]0,0.5)}]
\draw [thick, directed=1] (0,0) -- (0,1.5) -- (1,1.5) -- (1,2);
\draw [thick, directed=1] (0,.5) -- (1,.5) -- (1,1) -- (2,1) -- (2,0);
\draw [dotted, directed=1] (0,1.5) -- (0,2);
\draw [dotted, directed=.55] (1,0) -- (1,.5);
\draw [dotted, directed=.55] (2,2) -- (2,1);
\node [below] at (0,0) {\rotatebox{90}{\tiny \(t\)}};
\node [below] at (1,0) {\rotatebox{90}{\tiny \(0\)}};
\node [below] at (2,0) {\rotatebox{90}{\tiny \(\beta-1\)}};
\end{tikzpicture}
\quad
\sim
\quad
\begin{tikzpicture}[scale=.5,baseline={([yshift=-0.5ex]0,0.5)}]
\draw [thick, directed=1] (0,0) -- (0,1) -- (1,1) -- (1,2);
\draw [thick, directed=1] (1,1.5) -- (2,1.5) -- (2,0);
\draw [dotted, directed=1] (0,1.5) -- (0,2);
\draw [dotted, directed=.55] (1,0) -- (1,1);
\draw [dotted, directed=.55] (2,2) -- (2,1.5);
\node [below] at (0,0) {\rotatebox{90}{\tiny \(t\)}};
\node [below] at (1,0) {\rotatebox{90}{\tiny \(0\)}};
\node [below] at (2,0) {\rotatebox{90}{\tiny \(\beta-1\)}};
\end{tikzpicture}
\]
This follows from
\begin{equation*}
\begin{aligned}
F_1^{(t-1)}F_2F_1\onell{(t,0,\beta-1)} & = F_1^{(t-1)}F_2F_1E_1^{(t)}F_1^{(t)}\onell{(t,0,\beta-1)} \\
& = F_1^{(t-1)}F_2E_1^{(t-1)}F_1^{(t)}\onell{(t,0,\beta-1)}  \\
& = F_1^{(t-1)}E_1^{(t-1)}F_2F_1^{(t)}\onell{(t,0,\beta-1)}=F_2F_1^{(t)}\onell{(t,0,\beta-1)}.
\end{aligned}
\end{equation*}

Finally, move \(\boldsymbol{(5)}\) reduces to:
\[
\begin{tikzpicture}[scale=.5,baseline={([yshift=-0.5ex]0,0.55)}]
\draw [thick, directed=1] (0,0) -- (0,2);
\draw [thick, directed=1] (1,0) -- (1,1.5) -- (2,1.5) -- (2,0);
\draw [thick, directed=.55] (0,1) -- (1,1);
\draw [dotted, directed=1] (1,1.5) -- (1,2);
\draw [dotted, directed=.55] (2,2) -- (2,1.5);
\node [below] at (0,0) {\rotatebox{90}{\tiny \(t\)}};
\node [below] at (1,0) {\rotatebox{90}{\tiny \(v\)}};
\node [below] at (2,0) {\rotatebox{90}{\tiny \(\beta-v-1\)}};
\end{tikzpicture}
\quad
\sim
\quad
\begin{tikzpicture}[scale=.5,baseline={([yshift=-0.5ex]0,0.5)}]
\draw [thick, directed=1] (0,0) -- (0,2);
\draw [thick, directed=1] (1,0) -- (1,.5) -- (2,.5) -- (2,0);
\draw [thick, directed=.55] (0,1) -- (1,1) -- (1,1.5) -- (2,1.5) -- (2,.5);
\draw [dotted, directed=1] (1,1.5) -- (1,2);
\draw [dotted, directed=.55] (2,2) -- (2,1.5);
\node [below] at (0,0) {\rotatebox{90}{\tiny \(t\)}};
\node [below] at (1,0) {\rotatebox{90}{\tiny \(v\)}};
\node [below] at (2,0) {\rotatebox{90}{\tiny \(\beta-v-1\)}};
\end{tikzpicture}
\]
That is, we want to prove
\begin{equation}
F_2^{(v+1)}F_1\onell{(t,v,\beta-v-1)}=F_2F_1F_2^{(v)}\onell{(t,v,\beta-v-1)}\label{eq:6}
\end{equation}
We proceed by induction. The case \(v=0\) is obvious. For the general case, we use again a version of the higher quantum Serre relation \eqref{eq:1}:
\begin{equation}
F_2^{(v+1)}F_1=\sum_{r=1}^{v+1} (-1)^{r-1}q^{-(v-1)r}F_2^{(v+1-r)}F_1F_2^{(r)}.\label{eq:7}
\end{equation}
Since \(F_2^{(v+1)}\onell{(t,v,\beta-v-1)}=0\), the sum becomes:
\begin{equation}
F_2^{(v+1)}F_1=\sum_{r=1}^{v} (-1)^{r-1}q^{-(v-1)r}F_2^{(v+1-r)}F_1F_2^{(r)}.\label{eq:8}
\end{equation}
We use the induction hypothesis:
\begin{equation}
F_2^{(v+1-r)}F_1\onell{(t,v-r,\beta-v+r-1)}=F_2F_1F_2^{(v-r)}\onell{(t,v-r,\beta-v+r-1)}\label{eq:9}
\end{equation}
to obtain:
\begin{equation}
\begin{aligned}
F_2^{(v+1)}F_1\onell{(t,v,\beta-v-1)}&=\sum_{r=1}^v(-1)^{r-1}q^{-(v-1)r}F_2F_1F_2^{(v-r)}F_2^{(r)}\onell{(t,v,\beta-v-1)}  \\
&=\sum_{r=1}^{v}(-1)^{r-1}q^{-(v-1)r}\qbin{v}{r} F_2F_1F_2^{(v)} \onell{(t,v,\beta-v-1)}  \\
&= \left(1-\sum_{r=0}^v(-1)^rq^{-(v-1)r} \qbin{v}{r} \right) F_2F_1F_2^{(v)}\onell{(t,v,\beta-v-1)}. \label{Move5_keyEq}
\end{aligned}
\end{equation}
We then prove \(\sum_{r=0}^{v} (-1)^r q^{-(v-1)r}\qbin{v}{r} =0\), also by induction on \(v\). The first step is trivial, and the induction step goes as follows:
\begin{align*}
\sum_{r=0}^{v+1} (-1)^r q^{-vr} \qbin{v+1}{r} &=\sum_{r=0}^{v+1}(-1)^r q^{-(v-1)r} \qbin{v}{r}
+ \sum_{r=0}^{v+1} (-1)^r q^{-vr-v+r} \qbin{v}{r-1} \\
& =\sum_{r=0}^v(-1)^rq^{-(v-1)r} \qbin{v}{r}
 + q^{-v}\sum_{r=1}^{v+1}(-1)^rq^{-(v-1)r} \qbin{v}{r} \\
 &= 0-q^{-v}\sum_{r=0}^v(-1)^rq^{-(v-1)r}q^{-(v-1)} \qbin{v}{r} \\
 &= 0-q^{2-v+1}\sum_{r=0}^v(-1)^rq^{-(v-1)r} \qbin{v}{r} =0.
\end{align*}
For the computation, we used that the usual binomial induction formula lifts in the quantum case as follows:
\begin{equation}
\qbin{n+1}{r}
=q^r
\qbin{n}{r}
+q^{-n-1+r}
\qbin{n}{r-1}.\label{eq:10}
\end{equation}
This implies \eqref{eq:6}, and concludes the proof.
\end{proof}

\begin{lemma}
A positive second Markov move multiplies the braid invariant by \(q^{-\beta}\), and a negative one by \(q^{\beta}\).
\end{lemma}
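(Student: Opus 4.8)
The plan is to reduce a stabilization to the evaluation of a single \emph{curl}, and then to compute that curl inside the doubled Schur algebra in the same way the unknot was computed in Example~\ref{Ex_Circle_HomflyPt}. A positive (resp.\ negative) second Markov move replaces the $k$-strand braid $\boldsymbol{b}$ by $\boldsymbol{b}\sigma_k^{\pm1}$ on $k+1$ strands and then closes; the new strand sits at position $k+1$, runs straight through $\boldsymbol{b}$, and meets strand $k$ only in the final crossing. Its closing cup is $E_{k+1}$, which commutes with every generator $E_i,F_i$ with $i\le k-1$ appearing in $\boldsymbol{b}$, so I would first slide this cup up to the crossing. After this reduction the claim becomes the local statement that $F_{k+1}\,\sigma_k^{\pm1}\,E_{k+1}$ acts on $\onell{(\dots,1,0,\beta,\dots)}$ — label $1$ at position $k$ and the vacuum $0,\beta$ at positions $k+1,k+2$ — as the scalar $q^{\mp\beta}$, returning strand $k$ to label $1$; since this scalar is independent of $\boldsymbol{b}$, it factors out of the closure.

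To evaluate the curl I would expand $\sigma_k^{\pm1}=q^{\mp1}\idem-E_kF_k$. The first summand gives $q^{\mp1}F_{k+1}E_{k+1}\onell{(\dots,1,0,\beta,\dots)}$; as $F_{k+1}$ annihilates this weight, relation~\eqref{item:4} yields $F_{k+1}E_{k+1}=[\beta]$ on it exactly as in \eqref{eq:4}, so this term equals $q^{\mp1}[\beta]$. For the second summand $-F_{k+1}E_kF_kE_{k+1}$ I would use $[E_{k+1},F_k]=0=[E_k,F_{k+1}]$ to rewrite it as $-E_kF_{k+1}E_{k+1}F_k$ and then apply \eqref{eq:18} to the inner $F_{k+1}E_{k+1}$. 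The decisive point is that the resulting term $E_kE_{k+1}F_{k+1}F_k\onell{(\dots,1,0,\beta,\dots)}$ vanishes, because $F_{k+1}F_k$ forces the inadmissible entry $\beta+1$ at position $k+2$; using in addition $E_kF_k\onell{(\dots,1,0,\beta,\dots)}=\onell{(\dots,1,0,\beta,\dots)}$, only $-[\beta-1]$ survives.

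Adding the two contributions, the curl acts as $q^{\mp1}[\beta]-[\beta-1]$, and the elementary identity $q^{\mp1}[\beta]-[\beta-1]=q^{\mp\beta}$ concludes: the positive move multiplies $P(\boldsymbol{b})$ by $q^{-\beta}$ and the negative one by $q^{\beta}$. I would present this as a short sequence of local ladder-web moves, in the same spirit as Lemma~\ref{lemma:Markov1}, with the algebraic identities above justifying each step.

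The step I expect to be the main obstacle is the admissibility bookkeeping: the entire result hinges on $F_{k+1}F_k$ producing the inadmissible weight $\beta+1$ at position $k+2$, which is exactly what collapses the naive unknot factor $[\beta]$ into the single framing scalar $q^{\mp\beta}$. Justifying the localization — that the new strand's cup and cap slide past $\boldsymbol{b}$ and that strand $k$ enters the crossing at label $1$ — is routine but must be recorded, as it is what renders the local curl independent of $\boldsymbol{b}$.
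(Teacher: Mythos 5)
Your proposal is correct and follows essentially the same route as the paper: the paper likewise reduces to the local curl $q^{\mp 1}F_2E_2\onell{(1,0,\beta-0)}-F_2E_1F_1E_2\onell{(1,0,\beta-0)}$, evaluates $F_2E_2=[\beta]$ and $F_2E_1F_1E_2=[\beta-1]$ using exactly the commutations and the inadmissibility of the weight with entry $\beta+1$ that you identify, and concludes via $q^{\mp1}[\beta]-[\beta-1]=q^{\mp\beta}$. The only difference is cosmetic (your indices $k,k+1$ versus the paper's $1,2$, and your explicit remark on sliding the cup past $\boldsymbol{b}$, which the paper leaves implicit).
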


\begin{proof}
The claim comes down to computing:
\[
\begin{tikzpicture}[scale=.75,anchorbase]
\draw[thick] (0,0) -- (0,1) -- (1,1.5) -- (1,2) -- (2,2) -- (2,.5) -- (1,.5) -- (1,1) -- (.6,1.2);
\draw [thick, ->] (.4,1.3) -- (0,1.5) -- (0,2.5);
\draw [dotted] (1,0) -- (1,.5);
\draw [dotted,->] (1,2) -- (1,2.5);
\draw [dotted] (2,2.5) -- (2,2);
\draw [dotted,->] (2,.5) -- (2,0);
\node at (0,-.3) {\tiny \(1\)};
\node at (1,-.3) {\tiny \(0\)};
\node at (2,-.3) {\tiny \(\beta-0\)};
\end{tikzpicture}
\]
which corresponds to
\begin{equation}
q^{-1}F_2E_2\onell{(1,0,\beta-0)}-F_2E_1F_1E_2\onell{(1,0,\beta-0)}.\label{eq:11}
\end{equation}
Noting that
\begin{equation}
F_2E_2\onell{(1,0,\beta-0)}=[\beta]\onell{(1,0,\beta-0)}\label{eq:12}
\end{equation}
and
\begin{multline}\label{eq:13}
F_2E_1F_1E_2\onell{(1,0,\beta-0)}=E_1F_2E_2\onell{(0,1,\beta-0)}F_1\onell{(1,0,\beta-0)}\\=[\beta-1]E_1F_1\onell{(1,0,\beta-0)}  =[\beta-1]\onell{(1,0,\beta-0)},
\end{multline}
we obtain
\begin{multline}
q^{-1}F_2E_2\onell{(1,0,\beta-0)}-F_2E_1F_1E_2\onell{(1,0,\beta-0)}\\
=q^{-1}[\beta]\onell{(1,0,\beta-0)}-[\beta-1]\onell{(1,0,\beta-0)} = q^{-\beta}\onell{(1,0,\beta-0)}. 
\end{multline}
The computation for the negative case is similar.
\end{proof}

%
\subsection{The Homfly-Pt polynomial}
%

We now want to ensure that the invariant we get is a polynomial, before checking it is actually the Homfly-Pt polynomial.
The crucial point is that all terms in the smoothing of \(P({\boldsymbol{b}})\) decompose into concentric circles:

\begin{lemma} \label{lemma_circles_HomflyPt}
The element \(P(\boldsymbol{b})\) can be written as linear combination of monomials in the generators of \(\UzmmB{k+k}\), such that each term appearing corresponds diagrammatically to some concentric circles.
\end{lemma}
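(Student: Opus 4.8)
The plan is to expand $P(\boldsymbol{b})$ by the local crossing rules and then reduce each resulting term diagrammatically. First I would replace every crossing of $\boldsymbol{b}$ by its image, so that $P(\boldsymbol{b})$ becomes a $\C(q,q^\beta)$-linear combination of monomials, each monomial corresponding to a choice, at every crossing, of either the identity smoothing $\onell{(1,1)}$ (contributing a scalar) or the \emph{digon} $E_iF_i$. Composed with the closure \eqref{eq:19} realized through the elementary pattern \eqref{eq:15}, each monomial is then a ladder web without crossings, and the whole task is to show that, after applying the web relations, each such closed web is a disjoint union of concentric circles.

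The key structural observation is that both admissible local pieces preserve the \emph{connectivity} of the strands: the identity smoothing obviously does, and the digon $E_iF_i$, being a merge followed by a split (the strand entering at position $i$ and the one entering at position $i+1$ still exit at positions $i$ and $i+1$), realizes the identity permutation as well. Consequently every monomial, viewed as a flat tangle inside the braid box, is \emph{monotone} --- no strand turns back --- and induces the trivial connectivity on the $k$ endpoints. Since the cups and caps of the braid closure \eqref{eq:19} are nested, closing a monotone flat tangle with trivial connectivity necessarily produces nested, that is concentric, closed curves, the digons appearing only as local two-labeled bridges decorating these curves.

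It then remains to remove these decorations. On the first $k$ uprights I would freely isotope as allowed by \cite{CKM,QR}, and on the last $l$ uprights by the symmetry of Lemma~\ref{prop:antiinclusion}, sliding neighbouring rungs together: stacked digons on the same pair collapse using $E_iF_iE_iF_i\onell{(1,1)}=[2]E_iF_i\onell{(1,1)}$ from Lemma~\ref{P_invariance}, while digons on overlapping pairs are reorganized through the Serre relations \eqref{eq:1} and the commutation relations \eqref{item:4}, \eqref{eq:higherEF}. Here I would exploit, exactly as in the Markov computations, that most terms produced by these rewritings are non-admissible on the relevant weights and hence vanish; the surviving terms are each a union of concentric circles, a remaining bridge traced around such a circle resolving, via the bigon relation, into one further nested circle with an appropriately shifted value. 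Carrying this out term by term rewrites $P(\boldsymbol{b})$ as a linear combination of monomials each of which is manifestly concentric, which proves the claim. Note that this procedure may enlarge the number of terms, but the statement ``can be written'' accommodates this.

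The main obstacle is the interface between the $\N$-labeled (upward) uprights and the $\beta-\N$-labeled (downward) uprights, precisely the region where the closure pattern \eqref{eq:15} lives and where free planar isotopy is \emph{not} available. There one cannot simply slide a digon across; instead I would argue directly with the relations \eqref{item:4}, \eqref{eq:higherEF} and the explicit local moves already used in the Markov computations (compare \eqref{eq:12} and \eqref{eq:13}), checking that every bridge reaching the interface can still be straightened so that the resulting loops remain nested and no two circles become interleaved. Establishing that this straightening never creates a non-concentric configuration is the delicate point, and it is where the monotone structure of the braid must be combined most carefully with the admissibility constraints on the weights.
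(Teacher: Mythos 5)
Your setup (expanding the crossings so that each monomial is a flat ladder web closed by the pattern \eqref{eq:15}) agrees with the paper, and you correctly locate the danger zone at the interface between the $\N$-labeled and $(\beta-\N)$-labeled uprights. But there is a genuine gap, in two places. First, your ``key structural observation'' does not deliver what you claim: a digon $E_iF_i$ is a merge followed by a split, so after closure it is a bridge \emph{joining} two adjacent circles into a single connected trivalent graph. The closed diagram is therefore \emph{not} a union of concentric circles with harmless decorations --- removing those bridges by means of the defining relations is precisely the content of the lemma, not a cosmetic afterthought. Treating the connectivity as ``the identity permutation'' conflates the topological picture with the algebraic statement that each monomial can be \emph{rewritten}, in $\UzmmB{k+k}$, as a scalar multiple of a product of genuine circles.

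Second, the reduction procedure in your third and fourth paragraphs is asserted rather than carried out, and it is exactly here that the paper does its work. The paper's mechanism is: merge consecutive rungs into divided powers via \eqref{eq:22}, commute far-apart rungs via \eqref{eq:23}, and then use the $EF$-reversal \eqref{eq:21} (a consequence of \eqref{eq:higherEF}) on the leftmost pair of uprights; each application strictly increases the label of the leftmost upright, and since the total label is conserved the process terminates, detaching the leftmost upright as a circle, after which one inducts on the remaining uprights. Your sketch has no analogue of this monotonicity/termination argument, and your claim that ``most terms produced by these rewritings are non-admissible and hence vanish; the surviving terms are each a union of concentric circles'' is unsupported. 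Finally, note that the paper sidesteps the interface problem you flag as ``the delicate point'': since $\boldsymbol{w}$ is a monomial in the first $k-1$ generators, all rewriting happens on the $\N$-labeled side, and the only interaction with the closure is via sliding pieces from top to bottom, which is licensed by the proof of Lemma~\ref{lemma:Markov1}. Reorganizing your argument along these lines --- work entirely on the left, increase the leftmost label, terminate, induct --- would close the gap.
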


\begin{proof}\footnote{See \cite{QR2} for more details and a lift of this proof at the categorified level in a different context.}
By construction, the element  \(P(\boldsymbol{b})\) is written, as in the following picture, as linear combination of monomials of \(\UzklB\) of the type \(C \boldsymbol{w} C'\), where \(C\) is a cap, \(C'\) is a cup and \(\boldsymbol{w}\) is a monomial in the first \(k-1\) generators:
\begin{equation}
  \label{eq:20}
  \begin{tikzpicture}[yscale=.45,xscale=.7,anchorbase]
\draw (-1,-0.75) rectangle (1,0.75);
\node at (0,0) {\(\boldsymbol{w}\)};
\draw [thick,directed=.5] (.5,0.75) .. controls ++(0,1) and ++(0,1) .. (1.5,0.75) -- ++(0,-1.5) .. controls ++(0,-1) and ++(0,-1) .. (.5,-0.75);
\draw [thick,directed=.5] (0,0.75) .. controls ++(0,1.5) and ++(0,1.5) .. (2,0.75) -- ++(0,-1.5) .. controls ++(0,-1.5) and ++(0,-1.5) .. (0,-0.75);
\draw [thick,directed=.5] (-.5,0.75) .. controls ++(0,2) and ++(0,2) .. (2.5,0.75) -- ++(0,-1.5) .. controls ++(0,-2) and ++(0,-2) .. (-0.5,-0.75);
\end{tikzpicture}
\end{equation}
Notice that by the first Makov's move  (Lemma~\ref{lemma:Markov1}), or rather by its proof, we can slide around parts of \(\mathfrak{m}\) from the top to the bottom.
 We want to prove that any such element  can be written as a linear combination of monomials of \(\UzklB\) that correspond on the diagrammatic side to some concentric circles (possibly labeled by number higher than \(1\)). 

Let us  focus on the leftmost upright of the ladder \(\boldsymbol{w}\): if this is just a vertical line with no singularities, then its closure is already a circle and we are done by induction. Otherwise there are some \(E_1\)'s and \(F_1\)'s appearing in \(\boldsymbol{w}\) (creating ``singularities''). The idea of the proof is to perform a series of moves which will increase the labeling of this leftmost upright.
First, using the relations
\begin{equation}
\label{eq:22}
\begin{tikzpicture}[scale=.75,anchorbase]
\draw [thick,->] (0,0) -- (0,2);
\draw [thick,->] (1,0) -- (1,2);
\draw [thick,directed=.55] (0,.5) -- (1,.5);
\draw [thick,directed=.55] (0,1.5) -- (1,1.5);
\node at (.5,.2) {\tiny \(a\)}; 
\node at (.5,1.2) {\tiny \(b\)}; 
\end{tikzpicture}
\;=
\qbin{a+b}{a}\;
\begin{tikzpicture}[scale=.75,anchorbase]
\draw [thick,->] (0,0) -- (0,2);
\draw [thick,->] (1,0) -- (1,2);
\draw [thick,directed=.55] (0,1) -- (1,1);
\node at (.5,.2) {\tiny \(a\)}; 
\end{tikzpicture}
\qquad\text{and}\qquad \begin{tikzpicture}[scale=.75,anchorbase]
\draw [thick,->] (0,0) -- (0,2);
\draw [thick,->] (1,0) -- (1,2);
\draw [thick,rdirected=.55] (0,.5) -- (1,.5);
\draw [thick,rdirected=.55] (0,1.5) -- (1,1.5);
\node at (.5,.2) {\tiny \(a\)}; 
\node at (.5,1.2) {\tiny \(b\)}; 
\end{tikzpicture}
\;=
\qbin{a+b}{a}\;
\begin{tikzpicture}[scale=.75,anchorbase]
\draw [thick,->] (0,0) -- (0,2);
\draw [thick,->] (1,0) -- (1,2);
\draw [thick,rdirected=.55] (0,1) -- (1,1);
\node at (.5,.7) {\tiny \(a+b\)}; 
\end{tikzpicture}
\end{equation}
we can assume that there are no successive \(E_i\) or \(F_i\).

Recall that we can  slide far-away sequences, and adjacent \(E_i\)'s and \(F_j\)'s as follows:
\begingroup
\tikzset{every picture/.style={xscale=1.5}}
\begin{equation}
\label{eq:23}
\begin{tikzpicture}[scale=.75,anchorbase]
\draw [thick, ->] (0,0) -- (0,1.5);
\draw [thick,->] (.5,0) -- (.5,1.5);
\draw [thick,->] (1,0) -- (1,1.5);
\draw [thick, directed=.5] (.5,.5) -- (0,.5);
\draw [thick, directed=.5] (.5,1) -- (1,1);
\end{tikzpicture}\;=\;
\begin{tikzpicture}[scale=.75,anchorbase]
\draw [thick, ->] (0,0) -- (0,1.5);
\draw [thick,->] (.5,0) -- (.5,1.5);
\draw [thick,->] (1,0) -- (1,1.5);
\draw [thick, directed=.5] (.5,1) -- (0,1);
\draw [thick, directed=.5] (.5,.5) -- (1,.5);
\end{tikzpicture}\qquad\text{and}\qquad\begin{tikzpicture}[scale=.75,anchorbase]
\draw [thick, ->] (0,0) -- (0,1.5);
\draw [thick,->] (.5,0) -- (.5,1.5);
\draw [thick,->] (1,0) -- (1,1.5);
\draw [thick, rdirected=.5] (.5,.5) -- (0,.5);
\draw [thick, rdirected=.5] (.5,1) -- (1,1);
\end{tikzpicture} \;= \;
\begin{tikzpicture}[scale=.75,anchorbase]
\draw [thick, ->] (0,0) -- (0,1.5);
\draw [thick,->] (.5,0) -- (.5,1.5);
\draw [thick,->] (1,0) -- (1,1.5);
\draw [thick, rdirected=.5] (.5,1) -- (0,1);
\draw [thick, rdirected=.5] (.5,.5) -- (1,.5);
\end{tikzpicture}
\end{equation}
\endgroup
We now want to increase the labeling of the  far-left ladder upright until it becomes a circle. For this, we use in all possible places in the far-left inter-uprights the following consequence of \eqref{eq:higherEF}:
\begin{equation}
\label{eq:21}
\begin{tikzpicture}[scale=.75,anchorbase]
\draw [thick, ->] (0,0) -- (0,2);
\draw [thick, ->] (1,0) -- (1,2);
\draw [thick,directed=.55] (0,.5) -- (1,.5);
\draw [thick,directed=.55] (1,1.5) -- (0,1.5);
\end{tikzpicture}\;
=\sum \text{\tiny (coeff)} \;
\begin{tikzpicture}[scale=.75,anchorbase]
\draw [thick, ->] (0,0) -- (0,2);
\draw [thick, ->] (1,0) -- (1,2);
\draw [thick,rdirected=.55] (0,.5) -- (1,.5);
\draw [thick,rdirected=.55] (1,1.5) -- (0,1.5);
\end{tikzpicture}
\end{equation}

This results in increasing the labeling of the left strand (or in deleting completely the ladder rungs). Of course it can happen that two successive \(E_1\) and \(F_1\) are separated by a sequence of adjacent \(E_i\)'s and \(F_i\)'s. Using \eqref{eq:21} and \eqref{eq:23}, this case actually reduces to a situation similar to the following one (we omit the coefficients in the depiction below): we want to slide \(E_1^{(a)}\) and \(F_1^{(f)}\), but we have a sequence \(E_1^{(a_1)}E_2^{(a_2)}\dotsb E_i^{(a_i)}F_i^{(b_i)}\dotsb F_2^{(b_2)}F_1^{(b_1)}\). In this case, we start applying \eqref{eq:21} to \(E_i^{(a_i)}F_i^{a_i}\), then use \eqref{eq:23} to take them away and iterate until we are able to perform the desired relation for \(E_1\) and \(F_1\), as in the picture
\begin{equation}
  \label{eq:24}
\begin{tikzpicture}[scale=.5,anchorbase]
\draw[thick,->] (0,0) -- (0,3.5);
\draw[thick,->] (1,0) -- (1,3.5);
\draw[thick,->] (2,0) -- (2,3.5);
\draw[thick,->] (3,0) -- (3,3.5);
\draw[thick,directed=.55] (0,.5) -- (1,.5);
\draw[thick,directed=.55] (1,1) -- (2,1);
\draw[thick,directed=.55] (2,1.5) -- (3,1.5);
\draw[thick,directed=.55] (3,2) -- (2,2);
\draw[thick,directed=.55] (2,2.5) -- (1,2.5);
\draw[thick,directed=.55] (1,3) -- (0,3);
\end{tikzpicture}
=\sum
\begin{tikzpicture}[scale=.5,anchorbase]
\draw[thick,->] (0,0) -- (0,3.5);
\draw[thick,->] (1,0) -- (1,3.5);
\draw[thick,->] (2,0) -- (2,3.5);
\draw[thick,->] (3,0) -- (3,3.5);
\draw[thick,directed=.55] (0,.5) -- (1,.5);
\draw[thick,directed=.55] (1,1) -- (2,1);
\draw[thick,rdirected=.55] (2,1.5) -- (3,1.5);
\draw[thick,rdirected=.55] (3,2) -- (2,2);
\draw[thick,directed=.55] (2,2.5) -- (1,2.5);
\draw[thick,directed=.55] (1,3) -- (0,3);
\end{tikzpicture}
=\sum
\begin{tikzpicture}[scale=.5,anchorbase]
\draw[thick,->] (0,0) -- (0,3.5);
\draw[thick,->] (1,0) -- (1,3.5);
\draw[thick,->] (2,0) -- (2,3.5);
\draw[thick,->] (3,0) -- (3,3.5);
\draw[thick,directed=.55] (3,.5) -- (2,.5);
\draw[thick,directed=.55] (0,1) -- (1,1);
\draw[thick,directed=.55] (1,1.5) -- (2,1.5);
\draw[thick,directed=.55] (2,2) -- (1,2);
\draw[thick,directed=.55] (1,2.5) -- (0,2.5);
\draw[thick,directed=.55] (2,3) -- (3,3);
\end{tikzpicture}
=\cdots=
\sum
\begin{tikzpicture}[scale=.5,anchorbase]
\draw[thick,->] (0,0) -- (0,3.5);
\draw[thick,->] (1,0) -- (1,3.5);
\draw[thick,->] (2,0) -- (2,3.5);
\draw[thick,->] (3,0) -- (3,3.5);
\draw[thick,rdirected=.55] (0,1.5) -- (1,1.5);
\draw[thick,rdirected=.55] (1,1) -- (2,1);
\draw[thick,rdirected=.55] (2,.5) -- (3,.5);
\draw[thick,rdirected=.55] (3,3) -- (2,3);
\draw[thick,rdirected=.55] (2,2.5) -- (1,2.5);
\draw[thick,rdirected=.55] (1,2) -- (0,2);
\end{tikzpicture}
\end{equation}

Once all of these moves have been performed, we are left with a web \(\boldsymbol{w}\) where all \(F_1\)'s are over the \(E_1\)'s. We can then slide around the part containing the \(F_1\)'s from the top of the web to the bottom, and iterate the process until there are no \(E_1\)'s and \(F_1\)'s left. Since we increase the label of the leftmost ladder upright, but the sum of the labels of the uprights remains constant, the process has to terminate: this  means that there are no \(E_1\)'s and \(F_1\)'s left and hence the leftmost upright is a circle. We can then pass to the second upright, and by induction  we are done.
\end{proof}

\begin{remark}
  Together with arguments from the proof of Lemma~\ref{lemma_OneDim} below, we obtain from Lemma~\ref{lemma_circles_HomflyPt} that the endomorphism algebra of the identity object is one-dimensional.
\end{remark}

Following \cite{CKM}, let us denote by \(\Uzmk\) the quotient of \(\Uzk\) by the ideal generated by weights with an entry greater than \(m\) (that is, if a morphism factors through  \(\onel\) and \(\lambda\) has an entry greater than \(m\), then we set this morphism equal to zero). The following lemma is easy to check:

\begin{lemma} \label{lemma:specialization}
If \(\beta=m\) then we have  a functor
\begin{equation}
\UzklB\mapto \Uznnmm{m}{k+l}.\label{eq:14}
\end{equation}
\end{lemma}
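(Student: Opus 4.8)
The plan is to define the functor \(\Phi\colon \UzklB \to \Uznnmm{m}{k+l}\) to be the identity on the Chevalley generators, \(\Phi(E_i)=E_i\) and \(\Phi(F_i)=F_i\), and on objects to send \(\onell{\lambda}\) to \(\onell{\lambda}\) whenever every entry of \(\lambda\) lies in \(\{0,1,\dots,m\}\), and to the zero object otherwise. First I would observe that, after specializing \(\beta=m\), the weight lattice \(\lattice^m_{k,l}\) consists of sequences with \(\lambda_i\in\N\) for \(i\leq k\) and \(\lambda_i\in m-\N\) for \(i\geq k+1\); intersecting with the condition that all entries lie in \(\{0,\dots,m\}\) picks out exactly \(\{0,\dots,m\}^{k+l}\), which is precisely the set of weights indexing the nonzero idempotents of \(\Uznnmm{m}{k+l}\), the quotient of \(\Uzmm{k+l}\) by the ideal generated by weights having an entry greater than \(m\). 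Thus \(\Phi\) is a bijection on surviving weights, and the two kinds of forbidden weights in \(\lattice^m_{k,l}\) are disposed of for complementary reasons: an entry \(\lambda_i>m\), possible only for \(i\leq k\), is killed by the defining ideal of the quotient, while an entry \(\lambda_i<0\), possible only for \(i\geq k+1\), is simply not a weight of \(\Uzmm{k+l}\), so its idempotent already vanishes.

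Next I would check that \(\Phi\) respects the three families of relations of Definition~\ref{defn_UzklB}. The orthogonality \eqref{item:1} and the quantum Serre relations \eqref{item:6} are preserved on the nose, since they coincide with the corresponding relations defining \(\Uzmm{k+l}\) and carry no weight-dependent coefficients beyond the idempotent on which they act. The only relation needing attention is the commutator \eqref{item:4}, \([E_i,F_i]\onell{\lambda}=[\lambda_i-\lambda_{i+1}]\onell{\lambda}\): here the coefficient depends only on the numerical difference \(\lambda_i-\lambda_{i+1}\), which is unchanged under \(\Phi\). Since \(\Uzmm{k+l}\) satisfies the very same commutator with the same weights, and the quotient merely sets some idempotents to zero, the image of \eqref{item:4} is a valid identity in \(\Uznnmm{m}{k+l}\).

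The step I expect to be the main (and essentially the only) obstacle is checking that sending out-of-range weights to zero is consistent with \eqref{item:4} in the boundary cases, namely when \(E_i\) or \(F_i\) carries a surviving weight to a forbidden one. Concretely, if \(\onell{\lambda}\) survives but \(\onell{\lambda\pm\alpha_i}\) does not, one must confirm that the offending term \(E_iF_i\onell{\lambda}\) or \(F_iE_i\onell{\lambda}\) vanishes on both sides, so that the remaining summand of the commutator still reproduces the coefficient \([\lambda_i-\lambda_{i+1}]\). This is exactly the behaviour already displayed at the interface in Example~\ref{Ex_Circle_HomflyPt}, where \(E_kF_k\onell{\zeroweight}=0\) forces \(F_kE_k\onell{\zeroweight}=[m]\onell{\zeroweight}\); the same bookkeeping, comparing admissibility in \(\lattice^m_{k,l}\) against membership in \(\{0,\dots,m\}^{k+l}\), handles every boundary case and shows that \(\Phi\) is a well-defined functor.
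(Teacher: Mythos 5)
Your verification is correct and is exactly the routine check the paper has in mind: the paper offers no proof at all, merely asserting the lemma is ``easy to check,'' and your argument (identity on Chevalley generators, weights with an entry outside \(\{0,\dots,m\}\) sent to the zero object, relations \eqref{item:1}, \eqref{item:4}, \eqref{item:6} preserved because the target satisfies the identical relations on its surviving weights) supplies the omitted details. The only mild over-elaboration is the final paragraph: well-definedness needs only that each defining relation of \(\UzklB\) maps to an identity already valid in the quotient \(\Uznnmm{m}{k+l}\), which relation \eqref{item:4} is verbatim, so no separate boundary-case analysis is required.
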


\begin{theorem} \label{thm:HomflyPt}
If \(\beta\) is generic, then \(P(\boldsymbol{b})\) is the Homfly-Pt polynomial of \(\hat{\boldsymbol{b}}\). If \(\beta = m \in \Z\), with \(m \geq 2\) then \(P(\boldsymbol{b})\) is the \(\mathfrak{sl}_m\) link invariant of \(\hat{\boldsymbol{b}}\).
\end{theorem}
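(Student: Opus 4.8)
The plan is to exploit the classical fact that the (framed) Homfly-Pt polynomial is the \emph{unique} invariant of framed oriented links satisfying the skein relation and the prescribed unknot value recalled in the introduction. Since the preceding theorem already guarantees that $P(\boldsymbol{b})$ is a framed invariant of $\hat{\boldsymbol{b}}$, it will suffice to check that $P$ obeys the same skein relation and takes the same value on the unknot; the identification then follows by uniqueness.

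First I would verify the skein relation directly from the local rules defining $P$. Subtracting the image of a negative crossing from that of a positive one gives
\[
\bigl(q^{-1}\onell{(1,1)}-EF\onell{(1,1)}\bigr)-\bigl(-EF\onell{(1,1)}+q\onell{(1,1)}\bigr)=(q^{-1}-q)\,\onell{(1,1)},
\]
and since $\onell{(1,1)}$ is precisely the image of the oriented smoothing, this yields $P(L_+)-P(L_-)=(q^{-1}-q)P(L_0)$ at every braid crossing. (Equivalently, one checks that the braiding $\sigma=q^{-1}\onell{(1,1)}-EF\onell{(1,1)}$ has eigenvalues $q^{-1}$ and $-q$, using $(EF)^2\onell{(1,1)}=[2]\,EF\onell{(1,1)}$ from the proof of Lemma~\ref{P_invariance}, so that $\sigma-\sigma^{-1}=(q^{-1}-q)\id$.) Combined with Markov's theorem, which lets one realize any skein triple at a braid generator, this establishes the skein relation for $P$.

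Next I would pin down the normalization. By Lemma~\ref{lemma_circles_HomflyPt} every $P(\boldsymbol{b})$ reduces to a linear combination of concentric circles, so, using the remark that the endomorphism algebra of the identity object is one-dimensional, $P(\boldsymbol{b})$ is a genuine scalar in $\C(q,q^\beta)$ rather than merely a formal element of the algebra. Example~\ref{Ex_Circle_HomflyPt} evaluates a single circle to $\frac{q^\beta-q^{-\beta}}{q-q^{-1}}$, matching the prescribed unknot value. With the skein relation and the unknot value both in hand, the standard reduction — repeatedly applying the skein relation at chosen crossings to trade a link for a crossing-switched link plus a smoothing with strictly fewer crossings, descending by induction to unlinks whose value is fixed by the unknot value — shows that $P(\boldsymbol{b})$ equals the Homfly-Pt polynomial of $\hat{\boldsymbol{b}}$ when $\beta$ is generic.

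For $\beta=m\in\Z$ with $m\geq 2$ the same skein computation holds verbatim, while Example~\ref{Ex_Circle_HomflyPt} now returns the unknot value $[m]$. Invoking Lemma~\ref{lemma:specialization}, the construction descends to $\Uznnmm{m}{k+l}$, which is exactly the skew-Howe setting of \cite{CKM} computing the $\mathfrak{sl}_m$ Reshetikhin-Turaev invariant; as that invariant is the unique framed invariant satisfying our skein relation with unknot value $[m]$, we conclude that $P(\boldsymbol{b})$ is the $\mathfrak{sl}_m$ invariant of $\hat{\boldsymbol{b}}$. I expect the main obstacle to lie not in the skein computation, which is immediate, but in making the uniqueness reduction fully rigorous in the \emph{framed} setting — in particular matching the Reidemeister~I (framing) normalization, which on the $P$-side is governed by the second Markov move lemma — and, for $\beta=m$, in confirming that passing to the quotient of Lemma~\ref{lemma:specialization} introduces no degeneracies and genuinely reproduces the \cite{CKM} invariant rather than a naive specialization of the generic polynomial.
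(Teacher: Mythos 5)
Your proposal is correct and follows essentially the same route as the paper: verify the skein relation directly from the local crossing rules, use Lemma~\ref{lemma_circles_HomflyPt} together with Example~\ref{Ex_Circle_HomflyPt} to reduce to concentric circles and pin down the unknot value, and conclude by the uniqueness of the invariant determined by these data. The extra care you flag about the framed normalization and the $\beta=m$ specialization via Lemma~\ref{lemma:specialization} is reasonable but does not change the argument, which is exactly the one the paper gives.
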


\begin{proof}
First, using Lemma~\ref{lemma_circles_HomflyPt}, the associated invariant decomposes into a sum of elements associated to concentric circles. From Example~\ref{Ex_Circle_HomflyPt}, we know that a circle evaluates to a polynomial in \((q,q^{\beta})\), and evaluating all of the circles does give a polynomial. We want to check that it verifies the same skein and normalization relations as the Homfly-Pt polynomial (respectively, the \(\mathfrak{sl}_m\) link invariant), which implies the claim.

The skein relation can be checked as follows:
\begin{equation*}
\begin{tikzpicture}[scale=.75,anchorbase]
\draw [thick,->] (0,0) -- (1,1);
\draw [thick] (1,0) -- (.6,.4);
\draw [thick,->] (.4,.6) -- (0,1);
\end{tikzpicture}
-
\begin{tikzpicture}[scale=.75,anchorbase]
\draw [thick,->] (1,0) -- (0,1);
\draw [thick] (0,0) -- (.4,.4);
\draw [thick,->] (.6,.6) -- (1,1);
\end{tikzpicture}
\mapsto
q^{-1}\onell{(1,1)}-EF\onell{(1,1)}+EF\onell{(1,1)}-q\onell{(1,1)}
\mapsfrom (q^{-1}-q)\; 
\begin{tikzpicture}[scale=.75,anchorbase]
\draw [thick,->] (1,0) -- (1,1);
\draw [thick,->] (0,0) -- (0,1);
\end{tikzpicture}
\end{equation*}
and the unknot value is provided by Example~\ref{Ex_Circle_HomflyPt}.
\end{proof}

%
\section{The Alexander polynomial} 
%

We denote by \(\Uzerokl\) the \(\beta=0\) specialization of \(\UzklB\).
We have a similar decomposition as in the usual Schur algebra case:
\begin{equation}
\Uzerokl=\bigoplus_{N\in \Z}\U_q(\glnn{k+l})_{0}^N.\label{eq:16}
\end{equation}
In this decomposition, each subcategory has infinitely many objects. For example, \(\U_q(\glnn{1+1})_{0}^{0}\) has objects \((p,-p)\) for all \( p\in \N\).

The link invariant in \(\Uzerokk{k+k}\) will respect the Alexander skein relation, with \(t=q^{-2}\):
\begin{equation}
\begin{tikzpicture}[scale=.75,anchorbase]
\draw [thick,->] (0,0) -- (1,1);
\draw [thick] (1,0) -- (.6,.4);
\draw [thick,->] (.4,.6) -- (0,1);
\end{tikzpicture}
-
\begin{tikzpicture}[scale=.75,anchorbase]
\draw [thick,->] (1,0) -- (0,1);
\draw [thick] (0,0) -- (.4,.4);
\draw [thick,->] (.6,.6) -- (1,1);
\end{tikzpicture}
-(q^{-1}-q)\;
\begin{tikzpicture}[scale=.75,anchorbase]
\draw [thick,->] (1,0) -- (1,1);
\draw [thick,->] (0,0) -- (0,1);
\end{tikzpicture}\;
\mapsto 0.
\label{Alex_Skein}
\end{equation}
As in \cite{SarAlexander}, one runs into the issue of having the value zero associated to the unknot (since now \([\beta]=[0]=0\)), and, in turn, to any knot. The trick \cite{GPT,SarAlexander} is to cut open one of the strands, but the arguments allowing one to do this in the representation theory of \(U_q(\gl(1|1))\) need to be lifted up abstractly in \(\Uzerokk{k+l}\).

\begin{lemma} \label{lemma_OneDim}
The endomorphism space \(\End_{\Uzerokk{k+l}}(\onell{(1,0,\dots,0)})\) with \(k\geq 1\) is 
 \(1\)-dimensional.
\end{lemma}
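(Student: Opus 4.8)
The plan is to reduce the computation to a genuine lowest-weight situation, where the answer is immediate. Write $\nu=(0,\dots,0,1,0,\dots,0)\in\lattice$ for the weight with the single entry $1$ in position $k$ (all other entries $0$); at $\beta=0$ this is admissible. The first step is to prove that $\onell{(1,0,\dots,0)}$ and $\onell{\nu}$ are \emph{isomorphic} objects. The candidate inverse isomorphisms are the box-sliding morphisms $F_{k-1}\cdots F_2F_1\colon \onell{(1,0,\dots,0)}\to\onell{\nu}$ and $E_1E_2\cdots E_{k-1}\colon\onell{\nu}\to\onell{(1,0,\dots,0)}$ (for $k=1$ both objects coincide and this step is empty). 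To see that their composite is the identity I would simplify it from the innermost matching pair outward: using $[E_i,F_j]\onel=0$ for $i\neq j$ from \eqref{item:4}, the composite equals $E_1\cdots E_{k-2}\,(E_{k-1}F_{k-1})\,F_{k-2}\cdots F_1$, and $E_{k-1}F_{k-1}$ acts on the weight $\mu$ with a single box in position $k-1$. There $E_{k-1}\onell{\mu}=0$ (the target would carry a $-1$ in position $k$, which is not admissible), so $E_{k-1}F_{k-1}\onell{\mu}=[\mu_{k-1}-\mu_k]\onell{\mu}=[1]\onell{\mu}=\onell{\mu}$; peeling off this pair and inducting on $k$ shows the composite is exactly the identity (and likewise for the other order). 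Hence $\End_{\Uzerokk{k+l}}(\onell{(1,0,\dots,0)})\cong\End_{\Uzerokk{k+l}}(\onell{\nu})$.

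Next I would check that $\nu$ is a \emph{lowest weight}, i.e.\ $F_i\onell{\nu}=0$ for every $i$. This is a one-line admissibility check: subtracting $\alpha_i$ from $\nu$ always forces either an entry $-1$ into one of the first $k$ positions (forbidden, since those entries lie in $\N$) or an entry $+1$ into one of the last $l$ positions (forbidden at $\beta=0$, since those entries lie in $-\N$); in either case the target is non-admissible and the morphism is sent to the zero object.

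The final step is a triangular decomposition. Using the commutation relation \eqref{item:4} together with its divided-power form \eqref{eq:18} — the commuting pairs $F_jE_i=E_iF_j$ ($i\neq j$) and the rewriting of each matching pair $F_i^{(b)}E_i^{(a)}$ into normally ordered terms — one moves every $E$ to the left of every $F$. Each rewrite strictly decreases a well-founded measure (the number of $F$-before-$E$ inversions, with ties broken by total degree, since the correction terms of \eqref{eq:18} are shorter), so the process terminates and exhibits any endomorphism of $\onell{\nu}$ as a $\C(q)$-linear combination of monomials $E^{(\boldsymbol b)}F^{(\boldsymbol c)}\onell{\nu}$. Every term with $\boldsymbol c\neq 0$ vanishes because $F_i\onell{\nu}=0$ for all $i$; a term with $\boldsymbol c=0$ and $\boldsymbol b\neq 0$ is a strictly raising operator, hence lands in a weight different from $\nu$ and cannot contribute to an endomorphism of $\onell{\nu}$. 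Only the scalar term survives, so $\End_{\Uzerokk{k+l}}(\onell{\nu})=\C(q)\,\onell{\nu}$ is one-dimensional, the identity being nonzero since $\onell{\nu}\neq 0$ follows from the nontriviality of the algebra recalled after Definition~\ref{defn_UzklB}. Transporting back through the isomorphism of the first step proves the lemma.

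The step I expect to be the main obstacle is making the triangular decomposition fully rigorous in this doubled, $\beta=0$ setting: one must verify that the normal-ordering rewrite really terminates and yields a spanning set, and that the admissibility conventions at the $\N/(-\N)$ interface never interfere with the sorting. By contrast, the isomorphism and lowest-weight steps are routine once one notices that all the quantum integers entering the box-sliding computation are equal to $[1]=1$.
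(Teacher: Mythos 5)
Your proof is correct, but it takes a genuinely different route from the paper's. The paper argues diagrammatically: using isotopies and the moves from the proof of the first Markov move (Lemma~\ref{lemma:Markov1}) it pushes all trivalent vertices to the left side of the ladder, then evaluates innermost right curls via \(\qbin{-k}{l}=(-1)^l\qbin{k+l-1}{l}\), reducing inductively to a purely upward web, which must be the identity line. Your argument stays entirely algebraic: you first show \(\onell{(1,0,\dots,0)}\cong\onell{\nu}\) with the box in position \(k\) (the composites of \(F_{k-1}\cdots F_1\) and \(E_1\cdots E_{k-1}\) are identities because the relevant quantum integers are \([\pm1]\) and the wrong-way terms hit non-admissible weights), observe that \(\nu\) is a lowest weight at \(\beta=0\) since \(\nu-\alpha_i\) always acquires either a \(-1\) among the first \(k\) entries or a \(+1\) among the last \(l\), and then run a straightening argument using only relation \eqref{item:4} to normally order any monomial with all \(E\)'s to the left; the \(F\)'s kill \(\onell{\nu}\), and a nonempty product of \(E\)'s cannot preserve the weight, so only scalars survive. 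The termination worry you flag is unfounded: swapping an adjacent \(F_iE_i\) into \(E_iF_i\) strictly decreases the number of \(F\)-before-\(E\) inversions in both the main term and the (shorter) correction term, so the rewrite terminates; and both your proof and the paper's really establish ``at most one-dimensional,'' relying on the nontriviality discussion after Definition~\ref{defn_UzklB} for the lower bound, so you are on equal footing there. What your approach buys is rigor and self-containedness, with no appeal to web isotopies or to the structure of upward \(\sln\) webs; what it gives up is generality, since the lowest-weight trick is special to the single-box weight, whereas the paper's curl-reduction technique is what powers the subsequent remark bounding \(\End_{\UzklB}(\onell{(\lambda_1,\dots,\lambda_k,\beta-0,\dots,\beta-0)})\) for arbitrary \(\lambda\).
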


\begin{proof}
Let us consider a monomial in \(\End_{\Uzerokk{k+l}}(\onell{(1,0,\dots,0)})\), corresponding diagrammatically to a ladder web \(\boldsymbol{w}\). We possibly increase the value of \(k\), in order to be able to perform all necessary moves.

Let us thus consider the web \(\boldsymbol{w}\). Using isotopies in one or the other side and the proof of the first Markov's move (in particular Move \(\boldsymbol{(5)}\)),
we can assume that all trivalent vertices are on the left side. A typical situation now looks like:
\[
\begin{tikzpicture}[scale=.5,anchorbase]
\draw [thick] (0,0) -- (0,.8);
\draw [thick] (1,0.5) -- (1,.8);
\draw (-.2,.8) rectangle (1.2,1.2);
\draw [thick] (0,1.2) -- (0,1.8);
\draw [thick] (1,1.2) -- (1,1.8);
\draw (-.2,1.8) rectangle (1.2,2.2);
\draw [thick] (0,2.2) -- (0,2.8);
\draw [thick] (1,2.2) -- (1,2.8);
\draw (-.2,2.8) rectangle (1.2,3.2);
\draw [thick, ->] (0,3.2) -- (0,4);
\draw [thick] (1,3.2) -- (1,3.5);
\draw [thick, directed=.5] (1,3.5) -- (2,3.5) -- (2,3) -- (3,3) -- (3,1) -- (2,1) -- (2,.5) -- (1,.5);
\draw [thick, directed=.5] (1,2.5) -- (2,2.5) -- (2,1.5) -- (1,1.5);
\end{tikzpicture}
\]
Let us focus on an inner-most right curl. We can use the same moves as in the proof of Lemma~\ref{lemma_circles_HomflyPt} to move up all \(E_{k-1}\)'s that are comprised between the \(F_k^{(a)}\) and the \(E_{k}^{(b)}\) that form the curl (so that they commute with \(F_k^{(a)}\)) and the \(F_{k-1}\) to the bottom (so that they commute with \(E_k^{(b)}\)). At the end, we are left with
\[
\begin{tikzpicture}[scale=.5,baseline={([yshift=-0.5ex]0,0.5)}]
\draw [thick, ->] (0,0) -- (0,2);
\draw [thick, directed=.5] (0,1.5) -- (1,1.5) -- (1,.5) -- (0,.5);
\draw [semithick,dotted] (1,2) -- (1,1.5);
\draw [semithick,dotted, ->] (1,.5) -- (1,0);
\node at (0,-.3) {\tiny \(k\)};
\node at (.5,1.8) {\tiny \(l\)};
\end{tikzpicture}
\; =
\qbin{-k}{l}
\;\begin{tikzpicture}[scale=.5,baseline={([yshift=-0.5ex]0,0.5)}]
\draw [thick, ->] (0,0) -- (0,2);
\draw [semithick,dotted, ->] (1,2) -- (1,0);
\node at (.5,1.8) {};
\node at (0,-.3) {\tiny \(k\)};
\end{tikzpicture}
\;=
(-1)^l\qbin{k+l-1}{l}\;
\begin{tikzpicture}[scale=.5,baseline={([yshift=-0.5ex]0,0.5)}]
\draw [thick, ->] (0,0) -- (0,2);
\draw [semithick,dotted, ->] (1,2) -- (1,0);
\node at (.5,1.8) {};
\node at (0,-.3) {\tiny \(k\)};
\end{tikzpicture}
\]
The last equality (and all details about the quantum binomial coefficients) can be found in \cite[Section~1.3]{Lus4}.
Inductively we can thus keep reducing the curls until there are no left, and we only have an upward web on the left side of the picture. Since this upward web is an endomorphism of \((1,0,\dotsc,0)\), this can only be a line, which can be isotoped to an identity.
\end{proof}

\begin{remark}
The same proof  shows that \(\End_{\UzklB}\onell{(\lambda_1,\dots,\lambda_k,\beta-0,\dots,\beta-0)}\) cannot be of higher dimension than \(\End_{\Uzmm{k}}\onell{(\lambda_1,\dots,\lambda_k)}\).
\end{remark}

\begin{corollary}
\(\End_{\UzklB}(\onell{(1,1,0,\dots,0,\beta-0,\dots,\beta-0)})\) is \(2\)-dimensional, generated by \(\onell{(1,1,0,\dots)}\) and \(E_1F_1\onell{(1,1,0,\dots)}\).
\end{corollary}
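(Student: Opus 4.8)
The two elements $\onell{(1,1,0,\dots)}$ and $E_1F_1\onell{(1,1,0,\dots)}$ are manifestly endomorphisms of the stated object, so the content of the corollary is that they \emph{span} the endomorphism space and are \emph{linearly independent}. The plan is to treat these two halves separately: the first is a dimension bound that reduces to a standard computation in the ordinary Schur algebra, while the second is the genuinely delicate point.

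For the upper bound, write $\lambda=(1,1,0,\dots,0)$. By the Remark preceding this corollary, $\dim\End_{\UzklB}(\onell{(1,1,0,\dots,0,\beta-0,\dots,\beta-0)})\le\dim\End_{\Uzmm{k}}(\onell{\lambda})$, so it suffices to bound the latter. Since $\onell\lambda$ has total degree $2$, it lies in the block $\calS(k,2)$ of $\Uzk=\bigoplus_N\calS(k,N)$, and $\End_{\Uzk}(\onell\lambda)=\onell\lambda\,\calS(k,2)\,\onell\lambda$. By quantum Schur--Weyl duality this is the algebra of Hecke-algebra endomorphisms of the permutation module $M^{\lambda}$; as $S_\lambda$ is trivial for $\lambda=(1,1,0,\dots)$, its dimension is the number of $(S_\lambda,S_\lambda)$-double cosets in $S_2$, namely $|S_2|=2$. (One may instead argue directly, reducing an arbitrary endomorphism of $\onell\lambda$ to the span of $\onell\lambda$ and $E_1F_1\onell\lambda$ by the straightening relations, in the spirit of the proof of Lemma~\ref{lemma_circles_HomflyPt}.) Either way the endomorphism space has dimension at most $2$, so the two listed elements span it.

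The remaining, and main, step is to show that $\onell\lambda$ and $E_1F_1\onell\lambda$ are linearly independent over $\C(q,q^\beta)$. I would stress that this cannot be read off from the defining relations alone: relation~\eqref{item:4} only gives $[E_1,F_1]\onell\lambda=[\lambda_1-\lambda_2]\onell\lambda=[0]\onell\lambda=0$, and the computation $E_1F_1E_1\onell\lambda=[2]\,E_1\onell\lambda$ shows merely that \emph{if} $E_1F_1\onell\lambda$ were a scalar it would have to equal $[2]\onell\lambda$ --- which is not yet a contradiction. To rule this out I would pass to a faithful enough representation. Suppose for contradiction $E_1F_1\onell\lambda=f\,\onell\lambda$ with $f\in\C(q,q^\beta)$. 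Choosing an integer $m\ge 2$ avoiding the finitely many poles of $f$ and applying the specialization functor of Lemma~\ref{lemma:specialization} together with the identification of $\Uznnmm{m}{k+l}$ with a category of $\U_q(\slm)$-representations (following \cite{CKM}), the relation would persist in the honest quantum group acting on $V\otimes V$. There the $\lambda$-weight space is two-dimensional, spanned by $v_1\otimes v_2$ and $v_2\otimes v_1$, and a short computation with the coproduct shows that $E_1F_1$ acts as a non-scalar $2\times 2$ matrix, contradicting that it be a multiple of the identity. Hence no such $f$ exists, and since any dependence over $\C(q,q^\beta)$ would specialize to one at $\beta=m$, independence holds generically as well.

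The main obstacle is precisely this last step: the relations force $E_1F_1\onell\lambda=F_1E_1\onell\lambda$ and pin down the unique candidate scalar, but certifying that $E_1F_1\onell\lambda$ is genuinely non-scalar requires exhibiting a representation of $\UzklB$ --- most conveniently the $\slm$-web evaluation, or the honest $\U_q(\glnn{m})$-action on $V\otimes V$, for a suitable large $m$ --- in which its image is a non-scalar operator. Everything else is either a citation of the preceding Remark or a routine double-coset count.
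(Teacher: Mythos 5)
Your proof is correct and, for the spanning half, follows the paper's own (implicit) route: the corollary is stated without proof immediately after the Remark following Lemma~\ref{lemma_OneDim}, which bounds \(\End_{\UzklB}(\onell{(1,1,0,\dots,0,\beta-0,\dots,\beta-0)})\) by \(\End_{\Uzmm{k}}(\onell{(1,1,0,\dots,0)})\), and your double-coset count of the latter is exactly the intended standard computation. The paper gives no explicit argument for the linear independence of \(\onell{(1,1,0,\dots)}\) and \(E_1F_1\onell{(1,1,0,\dots)}\) (it is left implicit in the earlier assertion that \(\UzklB\) is non-trivial and contains ordinary Schur algebras), so your specialization at \(\beta=m\) via Lemma~\ref{lemma:specialization} and the skew Howe action on \(V\otimes V\), together with the observation that a generic dependence would survive infinitely many integer specializations, correctly fills a gap the paper leaves to the reader rather than departing from its approach.
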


The usual decomposition on the corresponding tensor product of representations can  be lifted to \(\UzklB\): setting \(e_1=\frac{1}{[2]}E_1F_1\onell{(1,1,0,\dots)}\) and \(e_2=\onell{(1,1,0,\dots)}-e_1\) we obtain  an orthogonal idempotent decomposition of the identity. Notice that that \(e_1\) and \(e_2\) are central idempotents, since \(\End_{\UzklB} ( \onell{(1,1,0,\dots)})\) is generated by \(e_1\) and \(e_2\).

\begin{prop} \label{cor_InvOpenStrand}
Let \(\boldsymbol{\tau}\in \End_{\UzklB}( \onell{(0,\dots,0,1,1,\beta-0,\dots)})\). Then  
\begin{equation}\label{eq:17}
\begin{tikzpicture}[scale=.5,anchorbase]
\draw [thick] (0,0) -- (0,1.5);
\draw  (-.2,1.5)  rectangle (1.2,2.5);
\node at (.5,2) {\(\boldsymbol{\tau}\)};
\draw [thick, ->] (0,2.5) -- (0,4);
\draw [thick, directed=.5] (1,2.5) -- (1,3.5) -- (2,3.5) -- (2,.5) -- (1,.5) -- (1,1.5);
\end{tikzpicture}\;
=\;
\begin{tikzpicture}[scale=.5,anchorbase]
\draw [thick] (0,0) -- (0,.5) -- (.4,.9);
\draw [thick] (.6,1.1) -- (1,1.5);
\draw  (-.2,1.5)  rectangle (1.2,2.5);
\node at (.5,2) {\(\boldsymbol{\tau}\)};
\draw [thick] (1,2.5) -- (.6,2.9);
\draw [thick, ->] (.4,3.1) -- (0,3.5) -- (0,4);
\draw [thick, directed=.5] (0,2.5) --  (1,3.5) -- (2,3.5) -- (2,.5) -- (1,.5) -- (0,1.5);
\end{tikzpicture}
\end{equation}
\end{prop}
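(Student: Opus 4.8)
The plan is to exploit that $\boldsymbol{\tau}$ lives in a very small, \emph{commutative} endomorphism algebra, and therefore commutes with the two crossings appearing on the right-hand side of \eqref{eq:17}. First I would record the structure of the relevant endomorphism space. Moving the two adjacent strands labelled $1$ into standard position by isotopies on the left $k$ uprights (permitted by \cite{CKM,QR}), together with Lemma~\ref{prop:antiinclusion} for the right $l$ uprights, identifies $\End_{\UzklB}(\onell{(0,\dots,0,1,1,\beta-0,\dots)})$ with the $2$-dimensional algebra of the preceding corollary, spanned by $\id$ and the rung $E_iF_i$, where $i$ denotes the position of the left $1$. As noted there, this algebra is commutative and its idempotents are central, so $\boldsymbol{\tau}$ commutes with \emph{every} endomorphism of $\onell{(\dots,1,1,\dots)}$; in particular it commutes with the two braiding morphisms (the crossings) occurring in \eqref{eq:17}.

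With this in hand, I would read the right-hand side as $\boldsymbol{\tau}$ sandwiched between a crossing below and a crossing above, the whole closed along the \emph{left} strand. Using commutativity I would slide $\boldsymbol{\tau}$ out from between the two crossings, so that the crossings become adjacent inside the closure. It then remains to simplify this pair of crossings against the cap--cup of the closure: invoking that a positive and a negative crossing are mutually inverse (the Reidemeister-II computation of Lemma~\ref{P_invariance}) and that parts may be slid from the top of the closure to the bottom (the content of the proof of the first Markov move, Lemma~\ref{lemma:Markov1}), the left-strand closure-with-crossings collapses to the plain closure of the right strand, which is exactly the left-hand side.

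To make the crossing manipulations fully explicit I would, by linearity, reduce to the two basis cases $\boldsymbol{\tau}=\id$ and $\boldsymbol{\tau}=E_iF_i$: for the identity, \eqref{eq:17} becomes a Reidemeister-I-type isotopy, while for the rung one slides $E_iF_i$ around the closure using Lemma~\ref{lemma:Markov1}, each reducing to a short computation with the relations of Definition~\ref{defn_UzklB} and \eqref{eq:higherEF}. \textbf{The main obstacle} is that the two crossings sit precisely at the interface between the $\N$-labelled and the $(\beta-\N)$-labelled uprights, where free isotopy is \emph{not} available (it is licensed only on the left $k$ uprights, and via Lemma~\ref{prop:antiinclusion} on the right $l$ uprights). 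The whole point of the argument is that the forbidden topological move across this interface is replaced by the algebraic fact that $\boldsymbol{\tau}$ is central; the delicate step is therefore to verify that, after commuting $\boldsymbol{\tau}$ past a crossing, the residual pair of crossings can genuinely be removed by the permitted moves rather than by a naive isotopy through the interface.
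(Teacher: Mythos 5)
Your argument is correct, but it takes a genuinely different (and arguably cleaner) route than the paper. The paper never commutes $\boldsymbol{\tau}$ past the crossings: instead it inserts the central orthogonal idempotents $e_1=\frac{1}{[2]}EF\onell{(\dots,1,1,\dots)}$ and $e_2=\onell{(\dots,1,1,\dots)}-e_1$ above and below $\boldsymbol{\tau}$ on both sides of \eqref{eq:17}, expands each crossing as $q^{\pm 1}\onell{(\dots,1,1,\dots)}-EF\onell{(\dots,1,1,\dots)}$, and checks via $e_2EF=EFe_2=0$ and $e_1EF=EFe_1=[2]e_1$ that the surplus terms on the right-hand side carry the coefficient $(-q-q^{-1})[2]+[2]^2=0$. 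Your route --- write $\boldsymbol{\tau}=a\,\onell{(\dots,1,1,\dots)}+b\,EF\onell{(\dots,1,1,\dots)}$, note that it then commutes with both crossings, slide it out, and cancel the two mutually inverse crossings by the Reidemeister-II computation of Lemma~\ref{P_invariance} --- performs the same cancellation in disguise (the identity $\onell{}-(q+q^{-1})EF+(EF)^2=\onell{}$ \emph{is} the paper's vanishing coefficient), but it isolates the conceptual content: commutativity of the two-dimensional endomorphism algebra plus Reidemeister II is all that is needed, whereas the paper's idempotent bookkeeping is tailored to mirror the $U_q(\gl(1|1))$ argument it cites. Two touch-ups to your write-up. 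First, the commutation takes place at the weight $(\dots,1,1,\beta-1,\dots)$ reached \emph{inside} the closure, not at $(\dots,1,1,\beta-0,\dots)$; so rather than saying $\boldsymbol{\tau}$ commutes with ``every endomorphism'' of the latter object, you should say that $\boldsymbol{\tau}$ and the crossings are all polynomials in the single rung $E_{k-1}F_{k-1}$ and hence commute at any weight (the paper glosses over the same point when it applies $e_1,e_2$ inside the closure). Second, your announced ``main obstacle'' is not actually there: once $\boldsymbol{\tau}$ is slid out, the two crossings are adjacent endomorphisms of the same object and compose to the identity purely algebraically, and the cup and cap are literally the same morphisms (on the uprights at the interface) on both sides of \eqref{eq:17}; so no residual isotopy across the interface, and no appeal to Lemma~\ref{lemma:Markov1}, is required.
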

\begin{proof}
This proof is similar to the proof of \cite[Proposition~4.5]{SarAlexander}.
Diagrammatically, we have
\[
\begin{tikzpicture}[scale=.5,anchorbase]
\draw [thick] (0,0) -- (0,1.5);
\draw  (-.2,1.5)   rectangle (1.2,2.5);
\node at (.5,2) {\(\boldsymbol{b}\)};
\draw [thick, ->] (0,2.5) -- (0,4);
\draw [thick, directed=.5] (1,2.5) -- (1,3.5) -- (2,3.5) -- (2,.5) -- (1,.5) -- (1,1.5);
\end{tikzpicture}
\;=\;
\begin{tikzpicture}[scale=.5,anchorbase]
\draw [thick] (0,0) -- (0,.7);
\draw  (-.2,.7)   rectangle (1.2,1.3);
\node at (.5,1) {\(e_1\)};
\draw [thick] (0,1.3) -- (0,1.5);
\draw [thick] (1,1.3) -- (1,1.5);
\draw  (-.2,1.5)   rectangle (1.2,2.5);
\node at (.5,2) {\(\boldsymbol{b}\)};
\draw [thick] (0,2.5) -- (0,2.7);
\draw [thick] (1,2.5) -- (1,2.7);
\draw (-.2,2.7)  rectangle (1.2,3.3);
\node at (.5,3) {\(e_1\)};
\draw [thick, ->] (0,3.3) -- (0,4);
\draw [thick, directed=.5] (1,3.3) -- (1,3.5) -- (2,3.5) -- (2,.5) -- (1,.5) -- (1,.7);
\end{tikzpicture}
\;+\;
\begin{tikzpicture}[scale=.5,anchorbase]
\draw [thick] (0,0) -- (0,.7);
\draw  (-.2,.7)   rectangle (1.2,1.3);
\node at (.5,1) {\(e_2\)};
\draw [thick] (0,1.3) -- (0,1.5);
\draw [thick] (1,1.3) -- (1,1.5);
\draw  (-.2,1.5)   rectangle (1.2,2.5);
\node at (.5,2) {\(\boldsymbol{b}\)};
\draw [thick] (0,2.5) -- (0,2.7);
\draw [thick] (1,2.5) -- (1,2.7);
\draw (-.2,2.7)  rectangle (1.2,3.3);
\node at (.5,3) {\(e_1\)};
\draw [thick, ->] (0,3.3) -- (0,4);
\draw [thick, directed=.5] (1,3.3) -- (1,3.5) -- (2,3.5) -- (2,.5) -- (1,.5) -- (1,.7);
\end{tikzpicture}
\;+\;
\begin{tikzpicture}[scale=.5,anchorbase]
\draw [thick] (0,0) -- (0,.7);
\draw  (-.2,.7)   rectangle (1.2,1.3);
\node at (.5,1) {\(e_1\)};
\draw [thick] (0,1.3) -- (0,1.5);
\draw [thick] (1,1.3) -- (1,1.5);
\draw  (-.2,1.5)   rectangle (1.2,2.5);
\node at (.5,2) {\(\boldsymbol{b}\)};
\draw [thick] (0,2.5) -- (0,2.7);
\draw [thick] (1,2.5) -- (1,2.7);
\draw (-.2,2.7)  rectangle (1.2,3.3);
\node at (.5,3) {\(e_2\)};
\draw [thick, ->] (0,3.3) -- (0,4);
\draw [thick, directed=.5] (1,3.3) -- (1,3.5) -- (2,3.5) -- (2,.5) -- (1,.5) -- (1,.7);
\end{tikzpicture}
\;+\;
\begin{tikzpicture}[scale=.5,anchorbase]
\draw [thick] (0,0) -- (0,.7);
\draw  (-.2,.7)   rectangle (1.2,1.3);
\node at (.5,1) {\(e_2\)};
\draw [thick] (0,1.3) -- (0,1.5);
\draw [thick] (1,1.3) -- (1,1.5);
\draw  (-.2,1.5)   rectangle (1.2,2.5);
\node at (.5,2) {\(\boldsymbol{b}\)};
\draw [thick] (0,2.5) -- (0,2.7);
\draw [thick] (1,2.5) -- (1,2.7);
\draw (-.2,2.7)  rectangle (1.2,3.3);
\node at (.5,3) {\(e_2\)};
\draw [thick, ->] (0,3.3) -- (0,4);
\draw [thick, directed=.5] (1,3.3) -- (1,3.5) -- (2,3.5) -- (2,.5) -- (1,.5) -- (1,.7);
\end{tikzpicture}
\]
Since \(e_1,e_2\) are central orthogonal idempotents, the two middle terms are zero.

Let us now show that we get the same result for the r.h.s.\ of \eqref{eq:17}. We have 
\[
\begin{tikzpicture}[scale=.5,anchorbase]
\draw [thick] (0,0) -- (0,.5) -- (.4,.9);
\draw [thick] (.6,1.1) -- (1,1.5);
\draw  (-.2,1.5)   rectangle (1.2,2.5);
\node at (.5,2) {\(\boldsymbol{b}\)};
\draw [thick] (1,2.5) -- (.6,2.9);
\draw [thick, ->] (.4,3.1) -- (0,3.5) -- (0,4);
\draw [thick, directed=.5] (0,2.5) --  (1,3.5) -- (2,3.5) -- (2,.5) -- (1,.5) -- (0,1.5);
\end{tikzpicture}
\;\mapsto\;
\begin{tikzpicture}[scale=.5,anchorbase]
\draw [thick] (0,0) -- (0,1.5);
\draw  (-.2,1.5)   rectangle (1.2,2.5);
\node at (.5,2) {\(\boldsymbol{b}\)};
\draw [thick, ->] (0,2.5)  -- (0,4);
\draw [thick, directed=.5] (1,2.5) --  (1,3.5) -- (2,3.5) -- (2,.5) -- (1,.5) -- (1,1.5);
\end{tikzpicture}
\;-q\;
\begin{tikzpicture}[scale=.5,anchorbase]
\draw [thick] (0,0) -- (0,1.5);
\draw  (-.2,1.5)   rectangle (1.2,2.5);
\node at (.5,2) {\(\boldsymbol{b}\)};
\draw [thick] (1,2.5) -- (.5,2.75);
\draw [thick] (0,2.5) -- (.5,2.75);
\draw [double] (.5,2.75) -- (.5,3.25);
\draw [thick, ->] (.5,3.25) -- (0,3.5) -- (0,4);
\draw [thick, directed=.5] (.5,3.25) -- (1,3.5) -- (2,3.5) -- (2,.5) -- (1,.5) -- (1,1.5);
\end{tikzpicture}
\;-q^{-1}\;
\begin{tikzpicture}[scale=.5,anchorbase]
\draw [thick] (0,0) -- (0,.5) -- (.5,.75);
\draw [double] (.5,.75) -- (.5,1.25);
\draw [thick] (.5,1.25) -- (0,1.5);
\draw [thick] (.5,1.25) -- (1,1.5);
\draw  (-.2,1.5)   rectangle (1.2,2.5);
\node at (.5,2) {\(\boldsymbol{b}\)};
\draw [thick, ->] (0,2.5) -- (0,4);
\draw [thick, directed=.5] (1,2.5) -- (1,3.5) -- (2,3.5) -- (2,.5) -- (1,.5) -- (.5,.75);
\end{tikzpicture}
\;+\;
\begin{tikzpicture}[scale=.5,anchorbase]
\draw [thick] (0,0) -- (0,.5) -- (.5,.75);
\draw [double] (.5,.75) -- (.5,1.25);
\draw [thick] (.5,1.25) -- (0,1.5);
\draw [thick] (.5,1.25) -- (1,1.5);
\draw  (-.2,1.5)   rectangle (1.2,2.5);
\node at (.5,2) {\(\boldsymbol{b}\)};
\draw [thick] (1,2.5) -- (.5,2.75);
\draw [thick] (0,2.5) -- (.5,2.75);
\draw [double] (.5,2.75) -- (.5,3.25);
\draw [thick, ->] (.5,3.25) -- (0,3.5) -- (0,4);
\draw [thick, directed=.5] (.5,3.25) -- (1,3.5) -- (2,3.5) -- (2,.5) -- (1,.5) -- (.5,.75);
\end{tikzpicture}
\]
Since \(e_2E_1F_1\onell{(1,1,0,\dots)}=0=E_1F_1e_2\onell{(1,1,0,\dots)}\),  the previous expression equals
\[
\begin{tikzpicture}[scale=.5,anchorbase]
\draw [thick] (0,0) -- (0,.7);
\draw  (-.2,.7)   rectangle (1.2,1.3);
\node at (.5,1) {\(e_1\)};
\draw [thick] (0,1.3) -- (0,2);
\draw [thick] (1,1.3) -- (1,2);
\draw  (-.2,2)   rectangle (1.2,3);
\node at (.5,2.5) {\(\boldsymbol{b}\)};
\draw [thick] (0,3) -- (0,3.7);
\draw [thick] (1,3) -- (1,3.7);
\draw (-.2,3.7)  rectangle (1.2,4.3);
\node at (.5,4) {\(e_1\)};
\draw [thick, ->] (0,4.3) -- (0,5);
\draw [thick, directed=.5] (1,4.3) -- (1,4.5) -- (2,4.5) -- (2,.5) -- (1,.5) -- (1,.7);
\end{tikzpicture}
\;+\;
\begin{tikzpicture}[scale=.5,anchorbase]
\draw [thick] (0,0) -- (0,.7);
\draw  (-.2,.7)   rectangle (1.2,1.3);
\node at (.5,1) {\(e_2\)};
\draw [thick] (0,1.3) -- (0,2);
\draw [thick] (1,1.3) -- (1,2);
\draw  (-.2,2)   rectangle (1.2,3);
\node at (.5,2.5) {\(\boldsymbol{b}\)};
\draw [thick] (0,3) -- (0,3.7);
\draw [thick] (1,3) -- (1,3.7);
\draw (-.2,3.7)  rectangle (1.2,4.3);
\node at (.5,4) {\(e_2\)};
\draw [thick, ->] (0,4.3) -- (0,5);
\draw [thick, directed=.5] (1,4.3) -- (1,4.5) -- (2,4.5) -- (2,.5) -- (1,.5) -- (1,.7);
\end{tikzpicture}
\;-q\;
\begin{tikzpicture}[scale=.5,anchorbase]
\draw [thick] (0,0) -- (0,.7);
\draw  (-.2,.7)   rectangle (1.2,1.3);
\node at (.5,1) {\(e_1\)};
\draw [thick] (0,1.3) -- (0,2);
\draw [thick] (1,1.3) -- (1,2);
\draw  (-.2,2)   rectangle (1.2,3);
\node at (.5,2.5) {\(\boldsymbol{b}\)};
\draw [thick] (0,3) -- (.5,3.2);
\draw [thick] (1,3) -- (.5,3.2);
\draw [double] (.5,3.2) -- (.5,3.5);
\draw [thick] (.5,3.5) -- (0,3.7);
\draw [thick] (.5,3.5) -- (1,3.7);
\draw (-.2,3.7)  rectangle (1.2,4.3);
\node at (.5,4) {\(e_1\)};
\draw [thick, ->] (0,4.3) -- (0,5);
\draw [thick, directed=.5] (1,4.3) -- (1,4.5) -- (2,4.5) -- (2,.5) -- (1,.5) -- (1,.7);
\end{tikzpicture}
\;-q^{-1}\;
\begin{tikzpicture}[scale=.5,anchorbase]
\draw [thick] (0,0) -- (0,.7);
\draw  (-.2,.7)   rectangle (1.2,1.3);
\node at (.5,1) {\(e_1\)};
\draw [thick] (0,1.3) -- (.5,1.5);
\draw [thick] (1,1.3) -- (.5,1.5);
\draw [double] (.5,1.5) -- (.5,1.8);
\draw [thick] (.5,1.8) -- (0,2);
\draw [thick] (.5,1.8) -- (1,2);
\draw  (-.2,2)   rectangle (1.2,3);
\node at (.5,2.5) {\(\boldsymbol{b}\)};
\draw [thick] (0,3) -- (0,3.7);
\draw [thick] (1,3) -- (1,3.7);
\draw (-.2,3.7)  rectangle (1.2,4.3);
\node at (.5,4) {\(e_1\)};
\draw [thick, ->] (0,4.3) -- (0,5);
\draw [thick, directed=.5] (1,4.3) -- (1,4.5) -- (2,4.5) -- (2,.5) -- (1,.5) -- (1,.7);
\end{tikzpicture}
\;+\;
\begin{tikzpicture}[scale=.5,anchorbase]
\draw [thick] (0,0) -- (0,.7);
\draw  (-.2,.7)   rectangle (1.2,1.3);
\node at (.5,1) {\(e_1\)};
\draw [thick] (0,1.3) -- (.5,1.5);
\draw [thick] (1,1.3) -- (.5,1.5);
\draw [double] (.5,1.5) -- (.5,1.8);
\draw [thick] (.5,1.8) -- (0,2);
\draw [thick] (.5,1.8) -- (1,2);
\draw  (-.2,2)   rectangle (1.2,3);
\node at (.5,2.5) {\(\boldsymbol{b}\)};
\draw [thick] (0,3) -- (.5,3.2);
\draw [thick] (1,3) -- (.5,3.2);
\draw [double] (.5,3.2) -- (.5,3.5);
\draw [thick] (.5,3.5) -- (0,3.7);
\draw [thick] (.5,3.5) -- (1,3.7);
\draw (-.2,3.7)  rectangle (1.2,4.3);
\node at (.5,4) {\(e_1\)};
\draw [thick, ->] (0,4.3) -- (0,5);
\draw [thick, directed=.5] (1,4.3) -- (1,4.5) -- (2,4.5) -- (2,.5) -- (1,.5) -- (1,.7);
\end{tikzpicture}
\]
Using \(e_1E_1F_1\onell{(1,1,0,\dots)}=[2]e_1\onell{(1,1,0,\dots)}=E_1F_1e_1\onell{(1,1,0,\dots)}\), this becomes
\[
\begin{tikzpicture}[scale=.5,anchorbase]
\draw [thick] (0,0) -- (0,.7);
\draw  (-.2,.7)   rectangle (1.2,1.3);
\node at (.5,1) {\(e_1\)};
\draw [thick] (0,1.3) -- (0,2);
\draw [thick] (1,1.3) -- (1,2);
\draw  (-.2,2)   rectangle (1.2,3);
\node at (.5,2.5) {\(\boldsymbol{b}\)};
\draw [thick] (0,3) -- (0,3.7);
\draw [thick] (1,3) -- (1,3.7);
\draw (-.2,3.7)  rectangle (1.2,4.3);
\node at (.5,4) {\(e_1\)};
\draw [thick, ->] (0,4.3) -- (0,5);
\draw [thick, directed=.5] (1,4.3) -- (1,4.5) -- (2,4.5) -- (2,.5) -- (1,.5) -- (1,.7);
\end{tikzpicture}
\;+\;
\begin{tikzpicture}[scale=.5,anchorbase]
\draw [thick] (0,0) -- (0,.7);
\draw  (-.2,.7)   rectangle (1.2,1.3);
\node at (.5,1) {\(e_2\)};
\draw [thick] (0,1.3) -- (0,2);
\draw [thick] (1,1.3) -- (1,2);
\draw  (-.2,2)   rectangle (1.2,3);
\node at (.5,2.5) {\(\boldsymbol{b}\)};
\draw [thick] (0,3) -- (0,3.7);
\draw [thick] (1,3) -- (1,3.7);
\draw (-.2,3.7)  rectangle (1.2,4.3);
\node at (.5,4) {\(e_2\)};
\draw [thick, ->] (0,4.3) -- (0,5);
\draw [thick, directed=.5] (1,4.3) -- (1,4.5) -- (2,4.5) -- (2,.5) -- (1,.5) -- (1,.7);
\end{tikzpicture}
\;+((-q-q^{-1})[2]+[2]^2)\;
\begin{tikzpicture}[scale=.5,anchorbase]
\draw [thick] (0,0) -- (0,.7);
\draw  (-.2,.7)   rectangle (1.2,1.3);
\node at (.5,1) {\(e_1\)};
\draw [thick] (0,1.3) -- (0,2);
\draw [thick] (1,1.3) -- (1,2);
\draw  (-.2,2)   rectangle (1.2,3);
\node at (.5,2.5) {\(\boldsymbol{b}\)};
\draw [thick] (0,3) -- (0,3.7);
\draw [thick] (1,3) -- (1,3.7);
\draw (-.2,3.7)  rectangle (1.2,4.3);
\node at (.5,4) {\(e_1\)};
\draw [thick, ->] (0,4.3) -- (0,5);
\draw [thick, directed=.5] (1,4.3) -- (1,4.5) -- (2,4.5) -- (2,.5) -- (1,.5) -- (1,.7);
\end{tikzpicture}
\]
and we are done.
\end{proof}

We finally obtain the following:

\begin{theorem} \label{prop:Alexander}
Let \(\boldsymbol{b}\) be a braid whose closure is a link \(K\). Let \(\tilde{\boldsymbol{b}}\) denote the closure of all but one strands of \(\boldsymbol{b}\). Then the image of \(\tilde{\boldsymbol{b}}\) in 
 \(\End_{\Uzerokk{k+k-1}}(\onell{(0,\dots, 0,1,0,\dots, \beta-0,\dots)})\), where the \(1\) appears in the same position as the open strand, is the Alexander polynomial of \(K\).
\end{theorem}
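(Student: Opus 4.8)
The plan is to show that the endomorphism produced by $\tilde{\boldsymbol{b}}$ is a scalar, that this scalar is a well-defined invariant of $K$, and that it satisfies the skein relation and normalization that characterize the Alexander polynomial; the conclusion then follows from uniqueness. First I would specialize Lemma~\ref{lemma_OneDim} to $\beta = 0$: after possibly enlarging $k$ and moving the single entry $1$ among the first uprights by isotopy on the left exactly as in that proof, the space $\End_{\Uzerokk{k+k-1}}(\onell{(0,\dots,0,1,0,\dots)})$ is one-dimensional. Hence the image of $\tilde{\boldsymbol{b}}$ is a scalar multiple of the identity, which I denote $A(K)$. Everything reduces to identifying $A(K)$ with $\Delta_K(q^{-2})$.

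For well-definedness I would run through the Markov calculus as in Theorem~\ref{thm:HomflyPt}, the decisive gain being the specialization $\beta = 0$. Braid invariance is Lemma~\ref{P_invariance} and invariance under the first Markov move is Lemma~\ref{lemma:Markov1}, neither of which involves $\beta$. The second Markov move multiplies the closed invariant by $q^{\pm\beta}$, which becomes $1$ at $\beta = 0$; this is precisely why the closed invariant vanishes on every knot and why cutting a strand is forced. The genuinely new point is that $A(K)$ must not depend on which strand is left open. This is supplied by Proposition~\ref{cor_InvOpenStrand}, which lets one slide the open strand across an adjacent closed component and thereby transfer the open end from one strand to the next; iterating the move shows that any two choices of open strand give the same scalar.

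I would then verify the skein relation and the normalization. Given a crossing in a diagram of $K$, I would first apply Proposition~\ref{cor_InvOpenStrand} to route the open strand away from that crossing, so that the crossing lies between two closed strands; the local identity \eqref{Alex_Skein}, valid at $\beta = 0$, then yields $A(L_+) - A(L_-) = (q^{-1}-q)\,A(L_0)$ after closing up. For the unknot, representing it as the closure of the trivial one-strand braid and leaving that strand open produces the bare identity endomorphism of $\onell{(1,0,\dots)}$, i.e.\ the scalar $1$, so $A(\text{unknot}) = 1$. Since the Alexander polynomial, evaluated at $t = q^{-2}$, is the unique link invariant satisfying this skein relation together with $\Delta_{\text{unknot}} = 1$, we obtain $A(K) = \Delta_K(q^{-2})$.

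The main obstacle is the independence of the open strand and its compatibility with the skein and Markov moves: all three of the steps above (triviality of the second Markov move at $\beta=0$, independence of the cut strand, and routing the open strand past a chosen crossing) rest on being able to pass the open strand freely through the surrounding closed strands, and the only rigorous vehicle for this is Proposition~\ref{cor_InvOpenStrand}. The care required is in checking that these sliding moves compose consistently and that the intermediate weights remain admissible at $\beta = 0$ throughout, so that no term is inadvertently sent to the zero object.
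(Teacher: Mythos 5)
Your proposal is correct and follows essentially the same route as the paper: one-dimensionality of the endomorphism space via Lemma~\ref{lemma_OneDim}, independence of the open strand via Proposition~\ref{cor_InvOpenStrand}, the skein relation \eqref{Alex_Skein}, and the unknot normalization, concluding by uniqueness. The only difference is that where you spell out the Markov calculus (in particular the triviality of the second Markov move at \(\beta=0\)) and the independence of the cutting place, the paper delegates these steps to \cite[Theorem 4.6]{SarAlexander}.
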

\begin{proof} 
It follows from Lemma~\ref{lemma_OneDim} that the endomorphism space is one-dimensional, hence we  get a polynomial.
By Proposition~\ref{cor_InvOpenStrand}, this does not depend on  the chosen strand.
Exactly as in \cite[Theorem 4.6]{SarAlexander} one can prove that it also does not depend on  the cutting place in the strand, and the fact we have a knot invariant. 
By \eqref{Alex_Skein}, this knot invariant satisfies the skein formula of the Alexander polynomial, and by  Lemma~\ref{lemma_OneDim} its value on the unknot coincides with the Alexander polynomial. Hence this invariant is the Alexander polynomial.
\end{proof}

Note that the process we developed to recover the Alexander polynomial is not \emph{stricto sensu} the specialization of the process yielding the Homfly-Pt polynomial: there is the extra step of cutting open one strand. However, the proofs of this last section are not specific to the \(\beta=0\) specialization, and it is easy to prove the following proposition by following the same steps as above. One thus can obtain all of these invariants from the exact same picture.

\begin{prop} \label{prop:reducedSpecialization}
Let \(K\) be a link, and let \(\tilde{\boldsymbol{b}}\) be as in Theorem~\ref{prop:Alexander}. Then
\begin{enumerate}[(i)]
\item the image of \(\tilde{\boldsymbol{b}}\)
  in
  \(\End_{\UzmmB{(k)+(k-1)}}(\onell{(0,\dots, 0,1,0,\dots,
    \beta-0,\dots,\beta-0)})\),
   gives the
  reduced Homfly-Pt polynomial of \(K\) (normalized to \(1\)
  for the unknot), and
\item  the image of \(\tilde{\boldsymbol{b}}\) 
 in
  \(\End_{\U_q(\gl_{(k)+(k-1)})_{\beta=m}}(\onell{(0,\dots,
    0,1,0,\dots, \beta-0,\dots,\beta-0)})\)
  gives the reduced \(\slm\) invariant of \(K\).
\end{enumerate}
\end{prop}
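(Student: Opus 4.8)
The plan is to rerun the proof of Theorem~\ref{prop:Alexander} for $\beta$ generic (part~(i)) and for $\beta=m$ (part~(ii)), changing only the skein relation and the normalization that one matches at the end; all the intermediate ingredients have already been established for arbitrary $\beta$.

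First I would check that the relevant endomorphism space is one-dimensional, so that the image of $\tilde{\boldsymbol{b}}$ is a well-defined scalar. The distinguished weight $(0,\dots,0,1,0,\dots,\beta-0,\dots,\beta-0)$ has its first $k$ entries equal to a single $1$ (in the position of the open strand) with zeros elsewhere, and its last $k-1$ entries equal to $\beta-0$. By the remark following Lemma~\ref{lemma_OneDim}, which holds for arbitrary $\beta$, the dimension of $\End_{\UzmmB{(k)+(k-1)}}(\onell{(0,\dots,0,1,0,\dots,\beta-0,\dots)})$ is bounded above by that of $\End_{\Uzmm{k}}(\onell{(0,\dots,1,\dots,0)})$, and the latter is one-dimensional because $(0,\dots,1,\dots,0)$ has total degree $1$ and projects onto a one-dimensional weight space of $\calS(k,1)$. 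Since the identity is a nonzero endomorphism, the space is exactly one-dimensional and $\tilde{\boldsymbol{b}}$ maps to a scalar in $\C(q,q^\beta)$.

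Next I would establish that this scalar is a genuine (framed) link invariant. Independence of the choice of open strand is exactly Proposition~\ref{cor_InvOpenStrand}, which is already proved for $\UzklB$ with no restriction on $\beta$, the idempotents $e_1,e_2$ used there being central and orthogonal for every value of $\beta$. Independence of the cutting place along the strand, together with invariance under the Markov moves, would then follow verbatim as in \cite[Theorem~4.6]{SarAlexander}, those arguments being diagrammatic and insensitive to $\beta$. The one point to track is that a positive (resp.\ negative) second Markov move now contributes the factor $q^{-\beta}$ (resp.\ $q^{\beta}$) computed earlier, rather than $1$, so that one lands on the framed reduced invariant with the same framing convention as the unreduced $P(\boldsymbol{b})$.

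Finally I would identify the scalar. The braiding is computed by the same local rule as for $P(\boldsymbol{b})$, so a crossing change relates three diagrams differing only locally, and the computation of Theorem~\ref{thm:HomflyPt} shows that the cut-open scalar obeys the Homfly-Pt skein relation when $\beta$ is generic and, after applying the functor of Lemma~\ref{lemma:specialization}, the $\slm$ skein relation when $\beta=m$. The normalization, however, changes: leaving the distinguished strand open replaces its closure, which in Example~\ref{Ex_Circle_HomflyPt} contributes the factor $(q^\beta-q^{-\beta})/(q-q^{-1})$, by the identity, so that the unknot now evaluates to $1$. Since the Homfly-Pt (resp.\ $\slm$) skein relation together with the value $1$ on the unknot determines the reduced polynomial, the scalar must be the reduced Homfly-Pt polynomial (resp.\ reduced $\slm$ invariant) of $K$. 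The step I expect to be the main obstacle is precisely the transfer of the $\gl(1|1)$-flavoured cutting arguments of \cite{SarAlexander} to general $\beta$: checking that cutting-place independence and full Markov invariance survive once the second Markov move produces $q^{\pm\beta}$ instead of the trivial factor that made $\beta=0$ special in Theorem~\ref{prop:Alexander}, and confirming that the resulting normalization is exactly that of the reduced polynomial.
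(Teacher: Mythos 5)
Your proposal is correct and follows exactly the route the paper intends: the paper gives no separate proof of Proposition~\ref{prop:reducedSpecialization}, remarking only that the arguments of the Alexander section (Lemma~\ref{lemma_OneDim}, Proposition~\ref{cor_InvOpenStrand}, the cutting arguments of \cite{SarAlexander}, and the skein/normalization check) are not specific to \(\beta=0\) and can be rerun verbatim for \(\beta\) generic or \(\beta=m\). Your elaboration of those steps, including the one-dimensionality of the endomorphism space and the observation that cutting open the strand replaces the unknot factor \((q^{\beta}-q^{-\beta})/(q-q^{-1})\) by \(1\), is exactly what the paper leaves to the reader.
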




\newcommand{\etalchar}[1]{$^{#1}$}
\providecommand{\bysame}{\leavevmode\hbox to3em{\hrulefill}\thinspace}
\providecommand{\MR}{\relax\ifhmode\unskip\space\fi MR }
\providecommand{\MRhref}[2]{%
  \href{http://www.ams.org/mathscinet-getitem?mr=#1}{#2}
}
\providecommand{\href}[2]{#2}

\end{document}